\newtheorem{satz}{Satz}[section]
\newtheorem{Theorem}[satz]{Theorem}
\newtheorem*{MainTheorem}{Main Theorem}
\newtheorem{Definition}[satz]{Definition}
\newtheorem{Lemma}[satz]{Lemma}
\newtheorem{Prop}[satz]{Proposition}
\newtheorem*{Conj*}{Conjecture}
\newtheorem{Question}{Question}
\theoremstyle{remark}
\newtheorem{example}[satz]{Example}
\newtheorem{examples}[satz]{Examples}
\newtheorem{remark}[satz]{Remark}
\newcommand{\F}{\mathcal{F}}
\newcommand{\Z}{\mathbb{Z}}
\newcommand{\C}{\mathcal{C}}
\newcommand{\K}{\mathcal{K}}
\newcommand{\D}{\mathcal{D}}
\newcommand{\un}{\underline{n}}
\newcommand{\setms}{\!\setminus\!}
\newcommand{\MM}{\mathcal{M}}
\newcommand{\CC}{\mathcal{C}}
\newcommand{\KK}{\mathcal{K}}
\newcommand{\FF}{\mathcal{F}}
\newcommand{\WW}{\mathcal{W}}
\newcommand{\DD}{\mathcal{D}}
\DeclareMathOperator{\colim}{colim}
\DeclareMathOperator{\Sd}{Sd}
\DeclareMathOperator{\id}{id}
\DeclareMathOperator{\Cat}{Cat}
\DeclareMathOperator{\sSet}{sSet}
\DeclareMathOperator{\Top}{Top}
\DeclareMathOperator{\Fun}{Fun}
\DeclareMathOperator{\Ex}{Ex}
\newcommand{\wt}{\widetilde}
\newcommand{\unn}{\un \setms n}
\newcommand{\del}{\partial}
\newenvironment{pf}{\begin{proof}[Proof]}{\end{proof}}
\numberwithin{equation}{section}
\def\namedlabel#1#2{\begingroup
    #2%
    \def\@currentlabel{#2}%
    \phantomsection\label{#1}\endgroup
}
\author{Lennart Meier}
\address{ University of Virginia,  Charlottesville VA 22904, USA}
\email{flm5z@virginia.edu} 
\author{Viktoriya Ozornova}
\address{Fachbereich Mathematik, Universit\"{a}t Bremen, 28359 Bremen, Germany}
\email{ozornova@math.uni-bremen.de} 
\title{Fibrancy of Partial Model Categories}
\begin{document}
\maketitle
\begin{abstract}
We investigate fibrancy conditions in the Thomason model structure on the category of small categories. In particular, we show that the category of weak equivalences of a partial model category is fibrant. Furthermore, we describe connections to calculi of fractions. 
\end{abstract}

\section{Introduction}
The homotopical study of (small) categories with respect to the nerve functor
\[N\colon \Cat\to \sSet \]
was started by Segal \cite{Segal} and Quillen \cite{Quillen}. But it was only Thomason in his 1980 paper \cite{ThomasonCat} who equipped Cat with a model structure, where a functor $f\colon \C \to \D$ is a weak equivalence if and only if $Nf\colon N\C \to N\D$ is a weak equivalence of simplicial set. The choice of fibrations is less obvious: If $\Ex\colon \sSet \to \sSet$ denotes Kan's Ex-functor, a functor $f\colon \C\to \D$ is a fibration if and only if $\Ex^2Nf: \Ex^2N\C \to \Ex^2N\D$ is a Kan fibration. Taking just functors $f$ such that $Nf$ or $\Ex Nf$ are fibrations would not define enough fibrations in $\Cat$. 

It is classically known that a category $\C$ is a groupoid if and only if $N\C$ is a fibrant simplicial set. We will give a characterization by Fritsch and Latch of categories $\C$ such that $\Ex N\C$ is fibrant, which appeared previously only without proof in print (as far as the authors know). 

A convenient characterization of categories $\C$ such that $\Ex^2 N\C$ is fibrant (i.e., $\C$ is fibrant in the Thomason model structure) is unknown to this date. This problem was already considered, for example, by Beke \cite{BekeFibrations}. In this note, we will present a sufficient criterion in form of a large class of fibrant categories motivated by homotopy theory: A relative category is a category $\MM$ together with a subcategory $\WW$ of $\MM$ containing all objects. Here, $\WW$ is thought of as the ``weak equivalences'' in $\MM$. In general, relative categories are not well-behaved. For example, the functor $\MM \to \MM[\WW^{-1}]$ from a relative category to its homotopy category (i.e., its localization at $\WW$) may send morphisms to isomorphisms that are not weak equivalences. In \cite{BarwickKanNeu}, Barwick and Kan introduced the notion of a partial model category, which is a convenient notion of a relative category with a ``3-arrow calculus" in the sense of Dwyer, Hirschhorn, Kan and Smith 
\cite{DHKS}. Among 
several pleasant properties of partial 
model categories, we want to mention that for a partial model category $(\MM,\WW)$ the 
functor $\MM\to \MM[\WW^{-1}]$ sends only the weak equivalences to isomorphisms.

Our main result is the following. 

\begin{MainTheorem}
Let $(\MM,\WW)$ be a partial model category. Then $\WW$ is fibrant in the Thomason model structure.
\end{MainTheorem}

This includes, in particular, all categories that possess all pushouts or all pullbacks, but is considerably more general. For example, the category of weak equivalences of a model category (such as $\Top$, $\sSet$, ...) usually does not possess all pushouts or all pullbacks, but is fibrant in the Thomason model structure by the theorem above.

The structure of this article is as follows: In Section \ref{Left calculus of fractions and fibrancy}, we will give the criterion by Fritsch and Latch for the fibrancy of $\Ex N\C$. In Section \ref{Partial model categories}, we will define partial model categories and discuss some examples. In Section \ref{Fibrancy of Partial Model Categories}, we will prove our main theorem. In Section \ref{Examples}, we will discuss some concrete examples and also give criteria when a category has a left calculus of fractions. We end with Section \ref{OpenQuestions}, containing open questions and further remarks. 

\subsection*{Acknowledgements}
We thank the University of Bremen and the University of Virginia for their hospitality during our visits. 

\subsection*{Conventions and notation}
We denote by $\Cat$ the category of small categories, by $\sSet$ the category of simplicial sets and by $\Top$ the category of topological spaces.  

There is an adjunction
\[\xymatrix{\Cat \ar@/^/[r]^N & \sSet \ar@/^/[l]^c }\]
where $N$ denotes the \textit{nerve functor} given by $(N\CC)_n = \Cat([n], \CC)$ and its left adjoint $c$ is often called the \textit{fundamental category functor}. 

Another important adjunction for our purposes is 
\[\xymatrix{\sSet \ar@/^/[r]^{\Ex} & \sSet \ar@/^/[l]^{\Sd} }\]
where $\Sd$ is the Kan subdivision functor and $\Ex$ is its right adjoint. More concretely, $\Sd \Delta[n]$ is the nerve of the poset that has as objects all non-degenerate simplices of $\Delta[n]$ and the relation is generated by $v \leq w$ if $v$ is a face of $w$. For a general $X$, we have $\Sd X = \colim_{\Delta[n]\to X} \Sd \Delta[n]$. Kan's $\Ex$ is defined by $(\Ex X)_n = \sSet(\Sd \Delta[n], X)$. 

We will always equip $\sSet$ with the Quillen model structure, where the fibrant objects are exactly the Kan complexes.   \\

For set-theoretic reasons, we have to define $\Cat$ to be the category of \textit{small} categories, i.e., categories with a \textit{set} of objects. As, for example, the category $\Top$ of topological spaces in the usual sense is not small, there appears to be a technical problem, if we want to state that the category of weak equivalences of $\Top$ is fibrant in $\Cat$. There are at least two possible remedies for this.

The first is to view the statement that $\Ex^2 N\CC$ is Kan for a category $\CC$ just as a formal statement. It is equivalent to $\CC$ having the right lifting property with respect to all inclusions $c\Sd^2 \Lambda^i[n] \to c\Sd^2 \Delta[n]$. This statement makes sense also for large categories $\CC$ and it is actually this lifting property that we will prove. 

The second possibility is to assume the existence of a Grothendieck universe $U$ and call its elements \textit{small sets}. We would then redefine $\Top$ to consist just of all topological spaces whose underlying set is small. Then $\Top$ is a small category. Note that we have to be careful in this case what we mean by a model category if $\Top$ should still be a model category. We can only assume the existence of limits and colimits over categories with a small set of objects. 

The reader might choose the remedy he or she likes and we will ignore this issue for the rest of this article. 

\section{Left calculus of fractions and fibrancy}
\label{Left calculus of fractions and fibrancy}
The goal of this section is to prove the following theorem.
\begin{Theorem}\label{CalculusTheorem}\cite{LatchThomasonWilson} Let $\CC$ be a category. Then $\Ex N\CC$ is Kan if and only if $\CC$ admits a left calculus of fractions.\end{Theorem}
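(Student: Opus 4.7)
The plan is to translate the fibrancy of $\Ex N\CC$ into a lifting problem inside $\CC$. By the $(\Sd,\Ex)$-adjunction together with the $(c,N)$-adjunction, $\Ex N\CC$ being Kan is equivalent to $\CC$ having the right lifting property against the poset inclusions
\[ c\Sd\Lambda^k[n]\hookrightarrow c\Sd\Delta[n] \]
for all $0\le k\le n$, $n\ge 1$. The first step I would carry out is to identify $c\Sd\Delta[n]$ with the poset of non-empty subsets of $[n]$ ordered by inclusion, and $c\Sd\Lambda^k[n]$ with the sub-poset of those subsets which are contained in some face $[n]\setminus\{i\}$ with $i\neq k$. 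An extension across such an inclusion then amounts to choosing two new objects, $F([n]\setminus\{k\})$ and $F([n])$, together with coherent morphisms from all existing images, so that every inclusion square of subsets maps to a commutative square in $\CC$.

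For the implication $(\Leftarrow)$, assuming $\CC$ admits a left calculus of fractions (so that every span can be completed to a commuting square and any pair of maps coequalized by a suitable post-composition), I would fill horns by induction on $n$. The case $n=1$ is immediate. For the inductive step, I would first build $F([n]\setminus\{k\})$ by iteratively applying the Ore condition to the spans created by pairs of adjacent edges of the missing face, using the inductive hypothesis on lower-dimensional horns as a guide for the ordering of the construction. Then $F([n])$ is built by one further round of Ore completion, combining the newly constructed face object with the existing faces of $\Delta[n]$ already in the horn.

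For the converse $(\Rightarrow)$, I would extract the axioms of a left calculus of fractions from low-dimensional horn fillings. The $\Lambda^1[2]$-horn provides a span in $\CC$ of the form $E_{01}\leftarrow V_1\rightarrow E_{12}$, and its fill gives exactly the Ore square completion. A suitable $\Lambda^k[3]$-horn, arranged so that two paths in $\Sd\Delta[3]$ become parallel after post-composition with the apex, yields the equalization condition (L3). These extractions are essentially unraveling the commutativity constraints imposed by $c\Sd\Delta[n]$ for small $n$.

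The main obstacle I expect is the inductive step in the $(\Leftarrow)$ direction: a single Ore completion only guarantees commutativity of a single square, but the object $F([n]\setminus\{k\})$ must simultaneously receive compatible maps from all of its proper non-empty subfaces. Enforcing all compatibility conditions at once requires both a carefully chosen order of iterated Ore applications and, when parallel maps to an intermediate object fail to coincide, an appeal to the equalization axiom (L3) to further post-compose into a coherent target. Making this combinatorial bookkeeping precise, so that no compatibility is ever broken by a later Ore step, is the technical heart of the argument.
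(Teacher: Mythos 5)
Your overall strategy coincides with the paper's: reduce via the two adjunctions to lifting against $c\Sd\Lambda^k[n]\to c\Sd\Delta[n]$, identify $c\Sd\Delta[n]$ with the poset of non-empty subsets of $\underline{n}$, extract \ref{CF1} from a $2$-dimensional horn and \ref{CF2} from a $3$-dimensional one, and fill general horns by iterated Ore completions corrected by \ref{CF2}. The problem is that you explicitly defer the one step that constitutes the actual content of the ``if'' direction: you yourself call the compatibility bookkeeping ``the technical heart of the argument,'' which means you have located the difficulty without resolving it. Two concrete issues are buried there. First, \ref{CF2} does not let you coequalize arbitrary parallel maps --- your parenthetical gloss of the axiom drops the hypothesis $fs=gs$ --- so before postcomposing you must exhibit a morphism equalizing the two composites by precomposition. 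The paper supplies it by noting that every object of the horn lies over the vertex $\{k\}$, so the two composites around each problematic square agree after precomposition with $F(\{k\}\to\underline{n}\setminus\{i,j\})$; that is exactly the hypothesis of \ref{CF2}. Second, your plan to build $F(\underline{n}\setminus k)$ first and $F(\underline{n})$ afterwards by induction on $n$ creates avoidable coherence obligations. The paper proceeds in the opposite order and without induction on the dimension: it constructs $F(\underline{n})$ by taking a common multiple of the maps $a_i=F(\{k\}\to\underline{n}\setminus i)$, applying \ref{CF2} to get a correcting map $t_{ij}$ for each pair $i\neq j$, and taking a common multiple of all the $t_{ij}$; it then reduces \emph{all} functoriality checks to the finitely many squares indexed by pairs $\{i,j\}$ (any two chains into $\underline{n}$ factor through some $\underline{n}\setminus\{i,j\}$); and finally it obtains $F(\underline{n}\setminus k)$ for free by declaring $F(\underline{n}\setminus k)\to F(\underline{n})$ to be an identity, so the remaining compatibilities are inherited.

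For the converse, your extraction of \ref{CF1} from a $\Lambda^1[2]$-horn is fine, but ``a suitable $\Lambda^k[3]$-horn, arranged so that two paths become parallel'' is not yet an argument: one must actually write down a functor on the thirteen objects of $c\Sd\Lambda^0[3]$ realizing the data $f,g,s$ with $fs=gs$ and verify the nine commuting squares, and then read off $t$ from the filler. That construction is delicate (it is the bulk of the corresponding lemma in the paper) and cannot simply be asserted to exist.
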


The content of this section is not new. The theorem above seems to appear first in \cite{LatchThomasonWilson}, and is also mentioned in \cite{BekeFibrations}. The second author also had a helpful e-mail exchange with Tibor Beke on this topic. We give here a proof since we were unable to find one in the literature. First we will recall the notion of a left calculus of fractions. 

\begin{Definition}\label{CalcFrac}\cite{GabrielZisman} 
 A small category $\mathcal{C}$ is said to have \textbf{left calculus of fractions} (with respect to itself) if it satisfies the following to conditions:
 \begin{enumerate}[label=\textbf{(CF\arabic*)}, ref=(CF\arabic*)]
  \item \label{CF1} For any two morphisms $s\colon X\to Y$, $t\colon X \to Z$ in $\mathcal{C}$, there is a commutative diagram of the form
  \begin{eqnarray*}
   \xymatrix{
   X\ar[r]^{s}\ar[d]^{t} & Y \ar@{-->}[d]\\
   Z \ar@{-->}[r] & W.
   }
  \end{eqnarray*}
\item \label{CF2} For any three morphisms $f,g \colon X\to Y$ and $s \colon X' \to X$ satisfying $fs=gs$, there is a further morphism $t\colon Y \to Y'$ so that $ tf=tg$. 
 \[\xymatrix{
X' \ar[r]^{s} & X \ar@/^/[r]^{f} \ar@/_/[r]_{g} & Y \ar@{-->}[r] & Y'
}\]
 \end{enumerate}

\end{Definition}

In a monoid language, the condition \ref{CF1} corresponds to the existence of a (right) common multiple for a pair of morphisms. The condition \ref{CF2} is a weak version of cancellativity (see Example \ref{MonoidExample} for a non-cancellative monoid having a left calculus of fractions). The following lemma says that we also obtain common multiples for arbitrary finite sets of morphisms with a common source. 

\begin{Lemma}\label{GenCM}
 Let $f_i\colon X \to X_i$ for $1\leq i \leq n$, $n\geq 2$, be a finite set of morphisms with the same source in a category $\C$ possessing left calculus of fractions. Then there are morphisms $g_i\colon X_i\to Y$ so that $g_if_i=g_jf_j$ for all $1\leq i,j\leq n$. 
\end{Lemma}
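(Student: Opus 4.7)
The plan is to prove this by induction on $n$. The base case $n=2$ is precisely condition \ref{CF1}: given $f_1\colon X \to X_1$ and $f_2\colon X \to X_2$, the axiom produces $g_1\colon X_1 \to Y$ and $g_2\colon X_2 \to Y$ with $g_1 f_1 = g_2 f_2$.

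For the inductive step, assume the statement holds for $n-1$ morphisms and suppose we are given $f_1,\ldots,f_n\colon X \to X_i$. By the induction hypothesis applied to $f_1,\ldots,f_{n-1}$, we obtain morphisms $g_1'\colon X_1 \to Y'$, \ldots, $g_{n-1}'\colon X_{n-1} \to Y'$ such that all composites $g_i' f_i$ for $1 \le i \le n-1$ agree; call this common morphism $h\colon X \to Y'$. Now apply \ref{CF1} to the pair $h\colon X \to Y'$ and $f_n\colon X \to X_n$ to obtain $k\colon Y' \to Y$ and $g_n\colon X_n \to Y$ such that $kh = g_n f_n$. Define $g_i \coloneqq k g_i'$ for $1 \le i \le n-1$. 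Then for any $i < n$ we have $g_i f_i = k g_i' f_i = kh = g_n f_n$, which yields the required common equality.

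I expect no serious obstacle here: the lemma is essentially a straightforward iteration of the binary common-multiple axiom \ref{CF1}, and condition \ref{CF2} is not needed. The only thing to check carefully is bookkeeping of the induction — in particular, that after postcomposing with $k$ the previously constructed $g_i'$ still have matching composites with the $f_i$, which is automatic since they all equal $h$ before postcomposition. Thus the proof reduces to $n-1$ successive applications of \ref{CF1}.
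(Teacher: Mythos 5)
Your proof is correct and is essentially identical to the paper's: both induct on $n$, apply the induction hypothesis to the first $n-1$ morphisms, and then use one more application of \ref{CF1} to the common composite and $f_n$, postcomposing the old $g_i$'s with the resulting map. Only the naming of the intermediate morphisms differs.
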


\begin{pf}
 This follows by easy induction. The base case is exactly Condition \ref{CF1}. Assuming the claim holds for some $n-1$, let  $f_i\colon X \to X_i$ for $1\leq i \leq n$, $n\geq 3$, be given. Then we already have  morphisms $g_i \colon X_i\to Y$ for $1\leq i \leq n-1$ with $g_if_i=g_jf_j$ by induction hypothesis. Now we apply Condition \ref{CF1} to the morphisms $g_1f_1 \colon X \to Y$, $f_n \colon X \to X_n$, so we obtain maps $h\colon Y \to Z$ and $k\colon X_n \to Z$ with $hg_1f_1=kf_n$. Now the set of functions $hg_1, \ldots, hg_{n-1}, k$ does the job. 
\end{pf}

\begin{Prop}\cite{LatchThomasonWilson} 
If a category $\mathcal{C}$ has left calculus of fractions, then $\Ex N\C$ is Kan.
\end{Prop}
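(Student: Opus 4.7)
The plan is to transport the fibrancy problem across the $\Sd \dashv \Ex$ and $c \dashv N$ adjunctions: filling a horn $\Lambda^i[n] \to \Ex N\C$ is equivalent to extending a functor $F\colon P_n^i \to \C$ to all of $P_n$. Here $P_n$ is the poset of non-empty subsets of $\{0,\dots,n\}$ ordered by inclusion, $v_i := \{0,\dots,n\} \setminus \{i\}$, $\top := \{0,\dots,n\}$, and $P_n^i := P_n \setminus \{v_i, \top\}$; I am using that $c\Sd\Delta[n] = P_n$ and, by computing the colimit $\Sd \Lambda^i[n] = \bigcup_{j \neq i} \Sd d_j \Delta[n]$, that $c\Sd\Lambda^i[n] = P_n^i$.

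The key move is to set $F(v_i) := F(\top) := x$ and $F(v_i \to \top) := \id_x$, which reduces the extension problem to constructing a cocone $\{F(u) \to x\}_{u \in P_n^i}$ over $F|_{P_n^i}$. A useful combinatorial observation is that every $u \in P_n^i$ is contained in some $v_k$ with $k \neq i$: if $i \in u$, pick any $k \in \{0,\dots,n\} \setminus u$; if $i \notin u$, then $u \subsetneq v_i$, so any $k \in v_i \setminus u$ works. It therefore suffices to construct $x$ together with morphisms $\phi_k\colon F(v_k) \to x$ for every $k \neq i$ satisfying (a) all the compositions $\phi_k \circ F(\{i\} \subseteq v_k)$ agree and (b) for every pair $k \neq l$ in $\{0,\dots,n\} \setminus \{i\}$, $\phi_k \circ F(v_k \cap v_l \subseteq v_k) = \phi_l \circ F(v_k \cap v_l \subseteq v_l)$. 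Given (b), the cocone map $F(u) \to x$ may be defined unambiguously as $\phi_k \circ F(u \subseteq v_k)$ for any admissible $k$, and functoriality of the resulting $F\colon P_n \to \C$ is then routine.

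To produce $x$, I would first apply Lemma \ref{GenCM} to the family $\{F(\{i\} \subseteq v_k)\colon F(\{i\}) \to F(v_k)\}_{k \neq i}$, obtaining an object $y$ together with morphisms $\phi_k^{(0)}\colon F(v_k) \to y$ satisfying (a). I would then iterate \ref{CF2} over the pairs $(k,l)$: at each stage the two parallel maps $F(v_k \cap v_l) \to y$ (via $v_k$ and via $v_l$) agree after precomposition with $F(\{i\} \subseteq v_k \cap v_l)$, because $i \in v_k \cap v_l$ and both precompositions reduce to the common morphism from $F(\{i\})$ secured in step (a); so \ref{CF2} produces a post-composition $y \to y'$ equalizing them. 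Replace $y$ by $y'$ and continue with the next pair.

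The main conceptual hurdle is to see that this iteration actually closes up, since each use of \ref{CF2} changes the target and might in principle destroy equalities already established. However, this is harmless: any equality of parallel morphisms into $y$ remains valid after post-composition with a further map $y \to y'$, so both the hypothesis of \ref{CF2} for the remaining pairs and the equalities already achieved (both (a) and earlier instances of (b)) carry through unchanged. After finitely many steps one obtains an object $x$ satisfying (a) and (b), and the extension is complete.
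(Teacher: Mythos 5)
Your argument is correct and is essentially the paper's own proof: transport across the adjunctions to the subset poset, apply Lemma \ref{GenCM} to the maps out of $F(\{i\})$ to get a first candidate vertex, then use \ref{CF2} on the squares over the pairwise intersections $v_k \cap v_l$ (which commute after precomposition with the map from $\{i\}$), and finally send $v_i$ to the same object as $\top$ with the identity between them. The only differences are organizational — you iterate \ref{CF2} sequentially and observe that postcomposition preserves earlier equalities, where the paper applies \ref{CF2} to all pairs in parallel and merges the results with one more application of Lemma \ref{GenCM} — and your cocone formulation packages the functoriality check slightly more cleanly.
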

\begin{proof}
Assume that we are given a map $\Lambda^k[n] \to \Ex N\C$ that we want to extend to a map $\Delta[n]\to \Ex N\C$. Consider the adjoint map (functor) $F\colon c\Sd\Lambda^k[n] \to \C$. As explained in Section 3 of \cite{FiorePaoli}, the category $c\Sd \Delta[n]$ is the poset of non-empty subsets $\mathcal{P}_+(\underline{n})$ of $\underline{n}=\{0,1,\ldots, n\}$ with inclusions as morphisms. Furthermore,   
\begin{eqnarray*}
c\Sd\Lambda^k[n] =\mathcal{P}_+(\underline{n}) \setminus \{\underline{n}, \underline{n}\setminus k\},
\end{eqnarray*}
considered as a subposet of $c\Sd \Delta[n]$. Here and in the following, we abbreviate $\underline{n}\setminus \{i\}$ to $\underline{n}\setminus i$.

To extend $F$ to a functor $c\Sd \Delta[n] \to \CC$, corresponds in this notation to the following: We have to assign to all inclusions $ \underline{n}\setminus \{i,k\}\to  \underline{n}\setminus \{k\}$ for $i\neq k$ and to all inclusions $ \underline{n}\setminus \{j\} \to  \underline{n}$ morphisms in $\CC$ and check that this defines a functor. As a first step, we define a functor $\wt{F} \colon \mathcal{P}_+(\underline{n}) \setminus \{\underline{n}\setminus k\} \to \C$ extending $F$. Set
\begin{eqnarray*}
a_i:=F(\{k\} \to \underline{n}\setminus i)
\end{eqnarray*}
for $i\neq k$. 
By Lemma \ref{GenCM}, there are morphisms $b_i\colon F(\underline{n}\setminus i) \to Z$ in $\C$ so that $b_1a_1=\ldots =b_na_n$ (omitting index $k$). 

We tentatively extend $F$ by $\wt{F}_{\mathrm{tent}}(\underline{n})=Z$ and $\wt{F}_{\mathrm{tent}}(\underline{n}\setminus i \to \underline{n})=b_i$.  This does not in general define a functor, as we will see below, but we will find a way to ``correct'' this definition to a functor. The only (possible) problem of functoriality arises for maps with target $\underline{n}$. It will be enough to consider for all pairs of distinct $i,j$ (unequal to $k$) the following square (and its image unter $\wt{F}_{\mathrm{tent}}$). 
\begin{eqnarray*}
\xymatrix{
 \underline{n}\setminus \{i,j\}\ar[r]\ar[d] &\underline{n}\setminus i\ar[d]\\
\underline{n}\setminus j \ar[r]& \underline{n}.
}
\end{eqnarray*}
Write $s=F(\{k\}\to \underline{n}\setminus \{i,j\})$. Since $F$ is a functor, we have 
\begin{eqnarray*}
F( \underline{n}\setminus \{i,j\}\to  \underline{n}\setminus j)s=a_j\\
F( \underline{n}\setminus \{i,j\}\to  \underline{n}\setminus i)s=a_i.
\end{eqnarray*}
Postcomposing by $b_j$ and $b_i$, respectively, we see that the image of the square will commute when precomposed with $s$. Yet, it does not necessarily commute as Condition \ref{CF2} is weaker than cancellativity. But we can still apply Condition \ref{CF2} to obtain a map $t_{ij} \colon Z \to W_{ij}$ so that the image of the square above commutes when postcomposed with $t_{ij}$. By Lemma \ref{GenCM}, we can find morphisms $u_{ij} \colon W_{ij} \to W$ in $\C$ for all  pairs of distinct $i,j$ (unequal to $k$) so that $v=u_{ij}t_{ij} \colon Z \to W$ is the same morphism for all $i,j$. 

Now define $\wt{F}(\underline{n})=W$ and $\wt{F}(\underline{n}\setminus i \to \underline{n})=vb_i$. We have to check that this extension of $F$ is now a functor. Again, the only check needed is for maps with target  $\underline{n}$. Assume we have two different chains of inclusions $A \subset A_1 \subset A_2 \subset \ldots A_r \subset \underline{n}$ and $A \subset B_1 \subset B_2 \subset \ldots B_s \subset \underline{n}$, where each inclusion may be assumed to enlarge the foregoing set by one element. Then either $B_s=A_r$, and we are done since $F$ was assumed to be a functor, or $A_r=\underline{n}\setminus i$ and $B_s=\underline{n}\setminus j$ for some $i\neq j$. Then $A \subset \underline{n}\setminus \{i,j\}$, and both morphisms factor through $\underline{n}\setminus \{i,j\}$, so that it is enough to show for all pairs of distinct $i,j$ (unequal to $k$) that the following square is mapped by $\wt{F}$ to a commutative square:
\begin{eqnarray*}
\xymatrix{
 \underline{n}\setminus \{i,j\}\ar[r]\ar[d] &\underline{n}\setminus i\ar[d]\\
\underline{n}\setminus j \ar[r]& \underline{n}.
}
\end{eqnarray*}
This is exactly achieved by the construction above. So $\wt{F}$ is indeed a functor extending $F$ to $\mathcal{P}_+(\underline{n}) \setminus \{\underline{n}\setminus \{k\}\}$.

Next, we have to extend the functor to all of $\mathcal{P}_+(\underline{n})$. Set $\wt{F}(\underline{n}\setminus k)=W$ and $\wt{F}(\underline{n}\setminus k \to \underline{n})=\id_{W}$. We are then forced to define
\begin{eqnarray*}
\wt{F}(\underline{n}\setminus \{i,k\} \to \underline{n}\setminus k)=\wt{F}(\underline{n}\setminus \{i,k\} \to \underline{n})
\end{eqnarray*}
for $i\neq k$. We have to check that this defines a functor $\mathcal{P}_+(\underline{n}) \to \C$. Note that it is immediate from the definition that compositions ending with $\underline{n}$ are mapped to the same morphisms in $\C$. By the same argument as before, it is enough to consider squares of the form 
\begin{eqnarray*}
\xymatrix{
 \underline{n}\setminus \{i,j,k\}\ar[r]\ar[d] &\underline{n}\setminus\{i,k\}\ar[d]\\
\underline{n}\setminus \{j,k\} \ar[r]& \underline{n}\setminus k.
}
\end{eqnarray*}
Since $\wt{F}(\underline{n}\setminus k \to \underline{n})$ is identity of $W$ and in particular a monomorphism, the claim follows from the commutativity of the outer square and the triangles in the following diagram: 
\begin{eqnarray*}
\xymatrix{
 \underline{n}\setminus \{i,j,k\}\ar[r]\ar[d] &\underline{n}\setminus\{i,k\}\ar[d]\ar@/^/[ddr]&\\
\underline{n}\setminus \{j,k\} \ar[r]  \ar@/_/[drr] & \underline{n}\setminus k\ar[rd]& \\
&&\underline{n}.
}
\end{eqnarray*}
This completes the proof of the fact that for any category $\C$ with left calculus of fractions, the simplicial set $\Ex N\C$ is Kan.
\end{proof}

\begin{Lemma}\cite{LatchThomasonWilson} 
Let $\C$ be a small category such that $\Ex N\C$ is Kan. Then any two morphism of $\C$ admit a common multiple, i.e., the Condition \ref{CF1} is satisfied. 
\end{Lemma}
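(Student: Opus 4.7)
The plan is to encode the two morphisms $s\colon X\to Y$ and $t\colon X\to Z$ as an outer $2$-horn in $\Ex N\C$, use that $\Ex N\C$ is Kan to fill it, and read off the desired common multiple from the filler. Since the issue is symmetric, the right horn to pick is $\Lambda^0[2]$: its two non-degenerate edges share the vertex $0$, matching the shape of a common-source span.

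By the adjunctions $\Sd\dashv \Ex$ and $c\dashv N$, a map $\Lambda^0[2]\to \Ex N\C$ is the same as a functor $F\colon c\Sd\Lambda^0[2]\to\C$. By the description recalled in the proof of the previous proposition, $c\Sd\Lambda^0[2]=\mathcal{P}_+(\underline{2})\setminus\{\underline{2},\underline{2}\setms 0\}$ is the poset with objects $\{0\},\{1\},\{2\},\{0,1\},\{0,2\}$ and inclusions as morphisms. I will define $F$ by
\[
F(\{0\})=X,\quad F(\{1\})=F(\{0,1\})=Y,\quad F(\{2\})=F(\{0,2\})=Z,
\]
together with $F(\{0\}\hookrightarrow\{0,1\})=s$, $F(\{0\}\hookrightarrow\{0,2\})=t$, $F(\{1\}\hookrightarrow\{0,1\})=\id_Y$, and $F(\{2\}\hookrightarrow\{0,2\})=\id_Z$. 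Functoriality is automatic, so this gives a genuine horn $\Lambda^0[2]\to\Ex N\C$.

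Since $\Ex N\C$ is assumed to be Kan, this horn extends to a $2$-simplex $\Delta[2]\to\Ex N\C$, which by adjunction is a functor $\wt F\colon c\Sd\Delta[2]=\mathcal{P}_+(\underline{2})\to\C$ extending $F$. Set $W=\wt F(\underline{2})$, $p=\wt F(\{0,1\}\hookrightarrow\underline{2})\colon Y\to W$, and $q=\wt F(\{0,2\}\hookrightarrow\underline{2})\colon Z\to W$. Functoriality of $\wt F$ applied to the two chains $\{0\}\subset\{0,1\}\subset\underline{2}$ and $\{0\}\subset\{0,2\}\subset\underline{2}$ forces
\[
ps = \wt F(\{0\}\hookrightarrow\underline{2}) = qt,
\]
which is exactly Condition \ref{CF1}. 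There is no real obstacle beyond translating carefully between the two adjunctions; the only substantive choice is picking the horn $\Lambda^0[2]$, so that its missing face is precisely the one whose filling would witness a common multiple.
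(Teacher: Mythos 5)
Your proof is correct and follows essentially the same route as the paper: encode the span as a functor $c\Sd\Lambda^0[2]\to\C$ with the identical assignment on objects and morphisms, fill the horn using the Kan property, and read off the common multiple from the image of the square $\{0\}\subset\{0,1\},\{0,2\}\subset\underline{2}$. The only cosmetic difference is that the paper presents the extension pictorially over all of $c\Sd\Delta[2]$, whereas you extract only the two chains needed for the conclusion.
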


\begin{proof}
Let $\C$ be a small category such that $\Ex N\C$ is a Kan simplicial set. Consider any two morphisms $s\colon X\to Y$, $t\colon X \to Z$ in $\mathcal{C}$. We define a map $F\colon c \Sd\Lambda^0[2] \to \C$ via 
\begin{center}
\begin{tikzpicture}
\begin{scope}[xshift=-3cm]
  \node (a) at (-2,2) {$01$};
  \node (b) at (0,2) {$02$};
  \node (d) at (-2,0) {$0$};
  \node (e) at (0,0) {$1$};
  \node (f) at (2,0) {$2$};
 \draw [->] (d) - - (a) node[midway,left] (K1){};
 \draw [<-] (b) -- (f)node[midway,left] (K3){};
  \draw [->] (d) -- (b)node[near end, above] (K2){};
 \draw[preaction={draw=white, -,line width=6pt}] (a) -- (e) node[near end,right] (K4){};
 \end{scope}
	\node (start) at (-1,1) {};
  \node (end) at (0.3,1) {};
\draw [|->] (start) -- (end);
 \begin{scope}[xshift=3cm]
  \node (a1) at (-2,2) {$Y$};
  \node (b1) at (0,2) {$Z$};
  \node (d1) at (-2,0) {$X$};
  \node (e1) at (0,0) {$Y$};
  \node (f1) at (2,0) {$Z$};
 \draw [->] (d1) - - (a1) node[midway,left] (L1){$s$};
 \draw [->] (f1) -- (b1)node[midway,left] (L3){$\id_Z$};
  \draw [->] (d1) -- (b1)node[near end, above] (L2){$t$};
 \draw[preaction={draw=white, -,line width=6pt}] [<-] (a1) -- (e1) node[near end,right] (L4){$\id_Y$};
 \end{scope}
\end{tikzpicture}
\end{center}

This diagram can be extended, due to the Kan property, to a functor $\wt{F}$ on

\begin{center}
\begin{tikzpicture}
  \node (max) at (0,4) {$012$};
  \node (a) at (-2,2) {$01$};
  \node (b) at (0,2) {$02$};
  \node (c) at (2,2) {$12$};
  \node (d) at (-2,0) {$0$};
  \node (e) at (0,0) {$1$};
  \node (f) at (2,0) {$2$};
  \draw [->] (d) -- (a);
	\draw [->] (d) -- (b);
	\draw [->] (f) -- (b);
	\draw [->] (f) -- (c);
	\draw[preaction={draw=white, -,line width=6pt}] [->] (e) -- (a);
	\draw[preaction={draw=white, -,line width=6pt}] [->] (e) -- (c);
	\draw [->] (a) -- (max);
	\draw [->] (b) -- (max);
	\draw [->] (c) -- (max);
\end{tikzpicture}
\end{center}
In particular, the image of the commutative square 
\begin{eqnarray*}
\xymatrix{
 0\ar[r]\ar[d] & 01\ar[d]\\
02 \ar[r]& 012
}
\end{eqnarray*}
in $c\Sd \Delta[2]$ under $\wt{F}$ yields the desired commutative square in $\C$. 
\end{proof}

\begin{Lemma}\cite{LatchThomasonWilson} 
 Let $\C$ be a small category so that $\Ex N\C$ is a Kan simplicial set. Then $\C$ satisfies Condition \ref{CF2}. 
\end{Lemma}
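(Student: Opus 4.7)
The plan is to mirror the preceding lemma: encode the hypothesis of \ref{CF2} as a functor $F\colon c\Sd\Lambda^0[3]\to \C$ (equivalently, a map $\Lambda^0[3]\to \Ex N\C$), extend it via the Kan property to $\wt F\colon c\Sd\Delta[3]\to \C$, and then read off the desired morphism $t$ from the image of the missing face $\{1,2,3\}$.

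Given $s\colon X' \to X$ and $f,g\colon X \to Y$ with $fs=gs$, I would set $F(\{0\}) = X'$, $F(\{1\})=X$, $F(\{2\})=F(\{3\})=Y$, $F(\{0,1\})=X$, and $F(A)=Y$ for every remaining object $A$ of $c\Sd\Lambda^0[3] = \mathcal{P}_+(\underline{3}) \setminus \{\underline{3}, \{1,2,3\}\}$. The non-identity edge assignments are
\[
\{0\}\subset\{0,1\}\mapsto s,\ \{1\}\subset\{1,2\}\mapsto f,\ \{1\}\subset\{1,3\}\mapsto g,\ \{0\}\subset\{0,2\}\mapsto fs,\ \{0\}\subset\{0,3\}\mapsto gs,
\]
together with $\{0,1\}\subset\{0,1,2\}\mapsto f$ and $\{0,1\}\subset\{0,1,3\}\mapsto g$; every other inclusion is sent to the appropriate identity. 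A direct check of the finitely many commutative squares in $c\Sd\Lambda^0[3]$ shows that $F$ is a functor, and the square $\{0\}\subset\{0,2\},\{0,3\}\subset\{0,2,3\}$ is the only one that genuinely invokes $fs=gs$.

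By the Kan condition for $\Ex N\C$, the functor $F$ extends to $\wt F\colon c\Sd\Delta[3]\to \C$. Set $Y' := \wt F(\{1,2,3\})$ and $u_{ij} := \wt F(\{i,j\}\subset \{1,2,3\})\colon Y \to Y'$ for $ij\in\{12,13,23\}$. The commutative square $\{2\}\subset\{1,2\},\{2,3\}\subset\{1,2,3\}$ has both $\{2\}$-edges sent to $\id_Y$ by $F$, so functoriality of $\wt F$ forces $u_{12}=u_{23}$; the analogous square involving $\{3\}$ forces $u_{13}=u_{23}$. Setting $t := u_{12}=u_{13}=u_{23}\colon Y \to Y'$, the remaining square $\{1\}\subset\{1,2\},\{1,3\}\subset\{1,2,3\}$ maps under $\wt F$ to the equation $tf=tg$, which is \ref{CF2}.

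The single non-routine step is verifying that $F$ really is a functor on $c\Sd\Lambda^0[3]$; the hypothesis enters in exactly one square, with every other compatibility forced by identities. Once $F$ is in place, the Kan filler supplies $Y'$ and $t$ automatically, so I expect only notational bookkeeping to remain.
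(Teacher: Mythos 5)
Your proof is correct and follows essentially the same route as the paper: both encode the data $(f,g,s)$ as a functor on $c\Sd\Lambda^0[3]$ and extract $t$ from a Kan filler. The only differences are cosmetic — your labelling of the horn differs from the paper's (which sends the vertices $2$ and $3$ to $X$ rather than $Y$ and reads off $t$ from the maps $b_i$ into the top cell $\wt F(\{0,1,2,3\})$ instead of from the new face $\{1,2,3\}$), but both diagrams are genuine functors and both fillers yield \ref{CF2}.
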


\begin{proof}
 
 Let $f,g \colon X\to Y$ and $s \colon X' \to X$ be morphisms in $\C$ so that $fs=gs$. We will prove the existence of a morphism $t \colon Y \to Y'$ with $tf=tg$ by filling a $\Lambda_0[3]$-horn in $\Ex N\C$. Using the adjunctions again, we give first a functor $F \colon c \Sd \Lambda_0[3] \to \C$. We will see later why its extension $\wt{F} \colon c \Sd \Delta[3] \to \C$ yields the desired morphism $t$. 

 Recall that $c \Sd \Lambda_0[3]$ has the following shape. 

 \begin{center}
\begin{tikzpicture}[scale=1.1]
  \node (a) at (-3,2) {$012$};
  \node (b) at (0,2) {$013$};
  \node (c) at (3,2) {$023$};

  \node (e) at (-3,0){$01$};
  \node (f) at (-1,0){$02$};
  \node (g) at (1,0){$03$};
  \node (h) at (3,0){$12$};
  \node (i) at (5,0){$13$};
  \node (j) at (7,0){$23$};
  \node (k) at (-3,-2) {$0$};
  \node (l) at (0,-2) {$1$};
  \node (m) at (3,-2) {$2$};
  \node (n) at (6,-2) {$3$};

\draw [->] (e)--(a);
\draw [->] (f)--(a);
\draw [->] (h)--(a);
\draw[blue] [->] (e)--(b);
\draw[blue] [->] (g)--(b);
\draw[blue] [->] (i)--(b);
\draw[red] [->] (f)--(c);
\draw[red] [->] (g)--(c);
\draw[red] [->] (j)--(c);

\draw[yellow] [<-] (e)--(k);
\draw[yellow] [<-] (f)--(k);
\draw[yellow] [<-] (g)--(k);
\draw[cyan] [<-] (e)--(l);
\draw[cyan] [<-] (h)--(l);
\draw[cyan] [<-] (i)--(l);
\draw[magenta] [<-] (f)--(m);
\draw[magenta] [<-] (h)--(m);
\draw[magenta] [<-] (j)--(m);
\draw[brown] [<-] (g)--(n);
\draw[brown] [<-] (i)--(n);
\draw[brown] [<-] (j)--(n);
\end{tikzpicture}
\end{center}

We now define a functor from this category to $\C$. 

\begin{center}
\begin{tikzpicture}[scale=1.2]
\def\grl{2.5}
\def\kll{\grl/2}
  \node (a) at (-3,2) {$Y$};
  \node (b) at (-3+\grl,2) {$Y$};
  \node (c) at (-3+2*\grl,2) {$Y$};

  \node (e) at (-3,0){$X$};
  \node (f) at (-3+\kll,0){$Y$};
  \node (g) at (-3+2*\kll,0){$X$};
  \node (h) at (-3+3*\kll,0){$X$};
  \node (i) at (-3+4*\kll,0){$X$};
  \node (j) at (-3+5*\kll,0){$X$};
  \node (k) at (-3,-2) {$X'$};
  \node (l) at (-3+\grl,-2) {$X'$};
  \node (m) at (-3+2*\grl,-2) {$X$};
  \node (n) at (-3+3*\grl,-2) {$X$};

\draw [->] (e)--(a) node[midway,left] (L1){$f$};
\draw [->] (f)--(a) node[pos=0.6, right] (L2){$\id_Y$};
\draw [->] (h)--(a) node[pos=0.5, below left] (L3){$g$};
\draw[blue] [->] (e)--(b) node[pos=0.4, right] (L4){$g$};
\draw[blue] [->] (g)--(b)  node[pos=0.6,left] (L5){$g$};
\draw[blue] [->] (i)--(b) node[midway,left] (L6){$g$};
\draw[red] [->] (f)--(c) node[pos=0.6,left] (L7){$\id_Y$};
\draw[red] [->] (g)--(c)  node[pos=0.3,right] (L8){$g$};
\draw[red] [->] (j)--(c) node[midway,left] (L9){$g$};

\draw[green] [<-] (e)--(k) node[midway,left] (M1){$s$};
\draw[green] [<-] (f)--(k) node[midway,left] (M2){$fs$};
\draw[green] [<-] (g)--(k) node[pos=0.75,right] (M3){$s$};
\draw[cyan] [<-] (e)--(l)node[pos=0.75,left] (M4){$s$};
\draw[cyan] [<-] (h)--(l)node[pos=0.75,left] (M5){$s$};
\draw[cyan] [<-] (i)--(l)node[pos=0.8,right] (M6){$s$};
\draw[magenta] [<-] (f)--(m)node[pos=0.4, below left] (M7){$g$};
\draw[magenta] [<-] (h)--(m) node[pos=0.6,left] (M8){$\id_X$};
\draw[magenta] [<-] (j)--(m)node[pos=0.8,right] (M9){$\id_X$};
\draw[brown] [<-] (g)--(n)node[pos=0.4,right] (M10){$\id_X$};
\draw[brown] [<-] (i)--(n)node[pos=0.4,right] (M11){$\id_X$};
\draw[brown] [<-] (j)--(n)node[midway,right] (M12){$\id_X$};
\end{tikzpicture}
\end{center}
 
 One easily checks that this, indeed, defines a functor (there are $9$ squares in this diagram, which can be seen to commute). By the Kan extension property and using the adjunctions again, we obtain a functor from $c\Sd \Delta[3]$ to $\C$, as displayed in the following picture. 
 \begin{center}
\begin{tikzpicture}[scale=1.2]
 \def\grl{2.5}
\def\kll{1.75}
  \node (a) at (-3,2) {$Y$};
  \node (b) at (-3+\grl,2) {$Y$};
  \node (c) at (-3+2*\grl,2) {$Y$};

  \node (e) at (-3,0){$X$};
  \node (f) at (-3+\kll,0){$Y$};
  \node (g) at (-3+2*\kll,0){$X$};
  \node (h) at (-3+3*\kll,0){$X$};
  \node (i) at (-3+4*\kll,0){$X$};
  \node (j) at (-3+5*\kll,0){$X$};
  \node (k) at (-3,-2) {$X'$};
  \node (l) at (-3+\grl,-2) {$X'$};
  \node (m) at (-3+2*\grl,-2) {$X$};
  \node (n) at (-3+3*\grl,-2) {$X$};

\draw [->] (e)--(a) node[midway,left] (L1){$f$};
\draw [->] (f)--(a) node[pos=0.6, right] (L2){$\id_Y$};
\draw [->] (h)--(a) node[pos=0.5, below left] (L3){$g$};
\draw[blue] [->] (e)--(b) node[pos=0.4, right] (L4){$g$};
\draw[blue] [->] (g)--(b)  node[pos=0.6,left] (L5){$g$};
\draw[blue] [->] (i)--(b) node[pos=0.45, below left] (L6){$g$};
\draw[red] [->] (f)--(c) node[pos=0.6,left] (L7){$\id_Y$};
\draw[red] [->] (g)--(c)  node[pos=0.3,right] (L8){$g$};
\draw[red] [->] (j)--(c) node[midway, below left] (L9){$g$};

\draw[green] [<-](e)--(k) node[midway,left] (M1){$s$};
\draw[green] [<-] (f)--(k) node[midway,left] (M2){$fs$};
\draw[green] [<-] (g)--(k) node[pos=0.75,right] (M3){$s$};
\draw[cyan] [<-] (e)--(l)node[pos=0.75,left] (M4){$s$};
\draw[cyan] [<-] (h)--(l)node[pos=0.75,left] (M5){$s$};
\draw[cyan] [<-] (i)--(l)node[pos=0.8,right] (M6){$s$};
\draw[magenta] [<-] (f)--(m)node[pos=0.4,left] (M7){$g$};
\draw[magenta] [<-] (h)--(m) node[pos=0.6,left] (M8){$\id_X$};
\draw[magenta] [<-] (j)--(m)node[pos=0.8,right] (M9){$\id_X$};
\draw[brown] [<-] (g)--(n)node[pos=0.55,right] (M10){$\id_X$};
\draw[brown] [<-] (i)--(n)node[pos=0.4,right] (M11){$\id_X$};
\draw[brown] [<-] (j)--(n)node[midway,right] (M12){$\id_X$};

 \node (max) at (-3+1.5*\grl,4) {$W$};
 \node (d) at (-3+3*\grl,2) {$Z$};
 \draw[thick] [->] (a)--(max) node[midway, left](N1){$b_3$};
\draw[thick] [->] (b)--(max)node[midway, left](N2){$b_2$};
\draw [thick] [->](c)--(max)node[midway, left](N3){$b_1$};
\draw[thick] [->] (d)--(max)node[midway, left](N4){$b_0$};

\draw[thick] [->] (h)--(d)node[midway, left](O1){$a_3$};
\draw[thick] [->] (i)--(d)node[pos=0.15, right](O2){$a_2$};
\draw[thick] [->] (j)--(d)node[midway, left](O3){$a_1$}; 
 
\end{tikzpicture}
\end{center}
Now this implies $b_2g=b_1g$, $b_1=b_3$ and $b_2g=b_3f$, so $b_1f=b_1g$ and $t=b_1$ does the job. This completes the proof. 
\end{proof}
This finishes the proof of Theorem \ref{CalculusTheorem}

\section{Partial model categories}
\label{Partial model categories}

\begin{Definition}A \textit{relative category} is a category $\MM$ together with a subcategory $\WW$ containing every object. The morphisms in $\WW$ are often called \textit{weak equivalences}.\end{Definition} 

A general relative category is difficult to work with. For example, one can form the localization $\MM \to \MM[\WW^{-1}]$, but in general not every preimage of an isomorphism will be a weak equivalence. One useful class of relative categories is the following:

\begin{Definition}\cite{BarwickKanNeu} A relative category $(\MM,\WW)$ is called a \textbf{partial model category} if there are subclasses $\CC, \FF\subset \WW$ (called \textit{(acyclic) cofibrations} and \textit{(acyclic) fibrations}, respectively) satisfying the following axioms:
\begin{enumerate}
\item $\WW$ satisfies the $2$ out of $6$ property, i.e., if $r,s$ and $t$ are morphisms such that the compositions $sr$ and $ts$ exist and are in $\WW$, then $r,s,t$ and $tsr$ are also in $\WW$.
\item For every map $f\in\CC$, its pushouts along arbitrary maps in $\MM$ exist and are again in $\CC$.
\item For every map $f\in\FF$, its pullbacks along arbitrary maps in $\MM$ exist and are again in $\FF$.
\item Every weak equivalence can be functorially factorized into a cofibration and a fibration, i.e., there is a functor
\begin{eqnarray*}
 F=(F_c, F_f)\colon \Fun(0\to 1, \mathcal{W}) \to \Fun(0\to 1, \C) \times \Fun(0\to 1, \F),
\end{eqnarray*}
such that for every morphism $g$ in $\mathcal{W}$, the morphisms $F_c(g), F_f(g)$ are composable and $F_f(g)\circ F_c(g)=g$ holds. 
\end{enumerate}
\end{Definition}

This is a slightly more restrictive variant of the notion of a homotopical category with $3$-arrow calculus as in \cite{DHKS}. 

As in \cite{BarwickKanNeu}, we have the following list of examples:
\begin{examples}
\begin{enumerate}
\item For every model category its underlying relative category is a
partial model category.
\item Let $(\MM,\WW)$ be a partial model category and $\MM_1\subset \MM$ be a full subcategory with the property that if $X\in\mathrm{Ob}\MM_1$ and $Y\in\mathrm{Ob}\MM$ are connected by a zig-zag of weak equivalences, then $Y\in \mathrm{Ob}\MM_1$. Such subcategories are called \textbf{homotopically full}. Then $(\MM_1, \WW\cap \MM_1)$ is a partial model category.
\item For every partial model category $(\MM, \WW)$ and category $\DD$,
the functor relative category $(\MM, \WW)^\DD = (\MM^\DD, \WW^\DD)$ is again a partial model
category.
\item If $\MM$ is a category with all pullbacks, then $(\MM, \MM)$ is a partial model category with $\CC$ only consisting of identities and $\FF = \MM$. 
\item If $\MM$ is a category with all pushouts, then $(\MM, \MM)$ is a partial model category with $\FF$ only consisting of identities and $\CC = \MM$.
\item For every partial model category $(\MM, \WW)$ the associated relative category $(\WW, \WW)$ is a
partial model category.
\end{enumerate}
\end{examples}

\section{Fibrancy of Partial Model Categories}
\label{Fibrancy of Partial Model Categories}
Let now $\WW$ be the category of weak equivalences of a partial model category or, equivalently, a partial model category, where every morphism is a weak equivalence. Denote the distinguished classes of cofibrations by $\C$ and that of fibrations by $\F$. Our aim in this section is to show that the simplicial set $\Ex^2N\WW$ is Kan. 

For a category $\D$, let $\K(\D)$ be the category $\D \times (0\to 1) \cup_{\D \times 1} \D^{\vartriangleleft}$, where $\D^{\vartriangleleft}$ denotes the category $\D$ with an additional initial object. Thus, $\K(\D)$ consists of two copies of $\D$, where there is a unique map from the $0$-copy of each object to the $1$-copy of it, and each object in the $1$-copy receives an additional morphism from a ``partial'' initial object. We will often consider the inclusion of $\D \cong \D \times 0$ into $\K(\D)$. We will denote the ``partial'' initial object by $k_\DD \in \K(\DD)$ or, if no confusion is possible, just by $k$. 

\begin{remark}\label{PropertiesK}
\begin{enumerate}
 \item  The assignment $\K \colon \Cat \to \Cat$ constitutes a functor. For a functor $F \colon \D \to \D'$, define $\K(F)\colon \K(\D)\to \K(\D')$ to be a copy of $F$ on both $\D\times 0$ and $\D\times 1$, and set $\K(F)(k_\D)=k_{\D'}$. A morphism of the form $d \times (0\to 1)$ in $\K(\D)$ is mapped to $F(d)\times (0\to 1)$, and the ones of the form $k_{\D} \to (d,1)$ are mapped to the unique morphisms $ k_{\D'} \to (F(d),1)$. This makes $\K$ into a functor. 
  
  \item We can identify $c\Sd^2\Delta[n]$ for $n\geq 1$ with $\K(c\Sd^2\del\Delta[n])$. As this category (and thus all of its subcategories) are posets, we will consider it either as a (partially) ordered set or as a category whenever convenient without further mentioning. For this, we use the description of $c\Sd^2\Delta[n]$ from Section 3 of \cite{FiorePaoli}. The objects of $c\Sd^2\Delta[n]$ are strictly increasing sequences $v_0\subsetneq \ldots \subsetneq v_{m}$, $m\geq 0$, of non-empty subsets of the set $\underline{n}$. The ordering is given as follows: The sequence $v_0\subsetneq \ldots \subsetneq v_{m}$ is less or equal ($\leq$) to $w_0 \subsetneq \ldots \subsetneq w_{l}$ iff the set $\{v_0, \ldots, v_m\}$ is contained in the set $\{w_0, \ldots, w_{l}\}$. The subposet $c\Sd^2\del\Delta[n]$ consists of all sequences $v_0\subsetneq \ldots \subsetneq v_{m}$ with $v_m\neq \un$. Note that any other element of $c\Sd^2\Delta[n]$ is either of the form 
 $v_0\subsetneq \ldots \subsetneq v_{m}\subsetneq \un$ with $v_0\subsetneq \ldots \subsetneq v_{m}$ in $c\Sd^2\del\Delta[n]$ or a sequence consisting of the single subset $\un$. We identify the latter with $k$ and the former with $c\Sd^2\del\Delta[n] \times 1$ in $\K(c\Sd^2\del\Delta[n])$. 
 The pictures for the case $n=1$ and $n=2$ might illustrate the situation. 
\begin{center}
  \begin{tikzpicture}[scale=0.8]
   \coordinate (a1) at (0,0);
      \coordinate (a2) at (2,0);
         \coordinate (a3) at (4,0);
            \coordinate (a4) at (6,0);
               \coordinate (a5) at (8,0);

\draw[fill, red](a1) circle (2pt)node(c1){} node[below] (b1){$0$};
\draw[fill, blue](a2) circle (2pt) node(c2){} node[below] (b2){$0\subsetneq \{0,1\}$};
\draw[fill](a3) circle (2pt) node(c3){} node[below] (b3){$\{0,1\}$};
\draw[fill, blue](a4) circle (2pt) node(c4){} node[below] (b4){$1\subsetneq \{0,1\}$};
\draw[fill, red](a5) circle (2pt)node(c5){} node[below](b5){$1$};

\draw[-stealth] (c1)--(c2);
\draw[-stealth] (c3)--(c2);
\draw[-stealth] (c3)--(c4);
\draw[-stealth] (c5)--(c4);

  \end{tikzpicture}
 \end{center}
and 
\begin{center}
  \begin{tikzpicture}
 \begin{scope}
 \coordinate (a1) at (0,0); 
 \coordinate (a2) at (7,0);
 \coordinate (a3) at (3.5,6.062);
 
\draw[fill, red](barycentric cs:a1=1,a2=0,a3=0) circle (2pt)node (b1){};
\draw[fill, red](barycentric cs:a1=0,a2=1,a3=0) circle (2pt)node (b2){};
\draw[fill, red](barycentric cs:a1=0,a2=0,a3=1.0) circle (2pt)node (b3){};
   
\draw[fill] (barycentric cs:a1=1,a2=1,a3=1) circle(2pt) node (b123){};

\draw[fill, red](barycentric cs:a1=1,a2=1) circle (2pt) node (b12){};
\draw[fill, red ](barycentric cs:a2=1,a3=1) circle (2pt) node (b23){};
\draw[fill, red](barycentric cs:a1=1,a3=1) circle (2pt) node (b13){};

\draw[fill, red](barycentric cs:b1=1,b12=1) circle (2pt) node (b112){};
\draw[fill, red](barycentric cs:b2=1,b12=1) circle (2pt) node (b212){};
\draw[fill, red](barycentric cs:b1=1,b13=1) circle (2pt) node (b113){};

\draw[fill, red](barycentric cs:b3=1,b13=1) circle (2pt) node (b313){};
\draw[fill, red](barycentric cs:b2=1,b23=1) circle (2pt) node (b223){};
\draw[fill, red](barycentric cs:b3=1,b23=1) circle (2pt) node (b323){};


\draw[fill, blue](barycentric cs:a1=1,b123=1) circle (2pt) node (b1123){};
\draw[fill,blue](barycentric cs:a2=1,b123=1) circle (2pt) node (b2123){};
\draw[fill, blue](barycentric cs:a3=1,b123=1) circle (2pt) node (b3123){};

\draw[fill,blue](barycentric cs:b12=1,b123=1) circle (2pt) node (b12123){};
\draw[fill, blue](barycentric cs:b23=1,b123=1) circle (2pt) node (b23123){};
\draw[fill, blue](barycentric cs:b13=1,b123=1) circle (2pt) node (b13123){};

\draw[fill, blue](barycentric cs:b1=1,b12=1,b123=1) circle (2pt) node (b112123){};
\draw[fill, blue](barycentric cs:b2=1,b23=1,b123=1) circle (2pt) node (b223123){};
\draw[fill, blue](barycentric cs:b3=1,b13=1,b123=1) circle (2pt) node (b313123){};

\draw[fill, blue](barycentric cs:b2=1,b12=1,b123=1) circle (2pt) node (b212123){};
\draw[fill, blue](barycentric cs:b3=1,b23=1,b123=1) circle (2pt) node (b323123){};
\draw[fill, blue](barycentric cs:b1=1,b13=1,b123=1) circle (2pt) node (b113123){};

\draw[-stealth, red] (b1)--(b112);
\draw[-stealth, red] (b1)--(b113);
\draw[-stealth, red] (b2)--(b212);
\draw[-stealth, red] (b2)--(b223);
\draw[-stealth, red] (b3)--(b313);
\draw[-stealth,red] (b3)--(b323);
\draw[-stealth] (b1)--(b1123);
\draw[-stealth] (b2)--(b2123);
\draw[-stealth] (b3)--(b3123);

\draw[-stealth] (b123)--(b1123);
\draw[-stealth] (b123)--(b2123);
\draw[-stealth] (b123)--(b3123);
\draw[-stealth] (b123)--(b12123);
\draw[-stealth] (b123)--(b23123);
\draw[-stealth] (b123)--(b13123);

\draw[-stealth] (b12)--(b12123);
\draw[-stealth] (b23)--(b23123);
\draw[-stealth] (b13)--(b13123);
\draw[-stealth, red] (b12)--(b112);
\draw[-stealth, red] (b13)--(b113);
\draw[-stealth, red] (b12)--(b212);
\draw[-stealth, red] (b23)--(b223);
\draw[-stealth, red] (b13)--(b313);
\draw[-stealth, red] (b23)--(b323);

\draw[-stealth] (b223)--(b223123);
\draw[-stealth, blue] (b2123)--(b223123);
\draw[-stealth, blue] (b23123)--(b223123);

 \draw[-stealth] (b212)--(b212123);
 \draw[-stealth, blue] (b2123)--(b212123);
 \draw[-stealth, blue] (b12123)--(b212123);
%
 \draw[-stealth] (b112)--(b112123);
 \draw[-stealth, blue] (b1123)--(b112123);
 \draw[-stealth, blue] (b12123)--(b112123);

\draw[-stealth] (b113)--(b113123);
\draw[-stealth, blue] (b1123)--(b113123);
\draw[-stealth, blue] (b13123)--(b113123);

\draw[-stealth] (b313)--(b313123);
\draw[-stealth, blue] (b3123)--(b313123);
\draw[-stealth, blue] (b13123)--(b313123);

\draw[-stealth] (b323)--(b323123);
\draw[-stealth, blue] (b3123)--(b323123);
\draw[-stealth, blue] (b23123)--(b323123);
\end{scope}
\end{tikzpicture}
\end{center}

\noindent Here, the red part is identified with $c\Sd^2\del\Delta[n] \times 0$, the blue part with $c\Sd^2\del\Delta[n] \times 1$ and the black point is the additional point $k$. Observe that we usually leave out the maps obtained as compositions of displayed maps in our pictures.

\item One can view $\K$ as a homotopically correct cone functor since for every category $\DD$, the nerve $N\KK(\DD)$ is contractible and $\DD \to \KK(\DD)$ is a Dwyer morphism. Dwyer morphisms form the cofibrations in a cofibration category structure on $\Cat$ as essentially proven in \cite{ThomasonCat} and observed in \cite[Section 1.4.5]{Karol}. Note that $\DD \to \DD^{\vartriangleleft}$ is not a Dwyer morphism.
 \end{enumerate}
\end{remark}

First, we show that we can use $\K$ to reformulate the fibrancy of $\Ex^2N \D$ for a category $\D$. Note to that purpose that by adjointness $\Ex^2N\D$ is fibrant if and only if $\DD$ has the lifting property with respect to all inclusions $c\Sd^2\Lambda^i[n] \to c\Sd^2\Delta[n]$.
\begin{Lemma}\label{CrucialLemma}
 For a category $\D$, to have a lifting property with respect to all inclusions $c\Sd^2 \Lambda^i[n] \to c\Sd^2 \Delta[n]$ is equivalent to having the lifting property with respect to all inclusions $c\Sd^2 \Lambda^n[n] \to \K(c\Sd^2 \Lambda^n[n])$. 
\end{Lemma}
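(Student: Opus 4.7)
The plan is to prove both implications separately, with the forward direction being routine and the backward direction reducing to the construction of a retraction.

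For the forward direction, assume $\D$ has the lifting property with respect to every inclusion $c\Sd^2\Lambda^i[m] \to c\Sd^2\Delta[m]$. Given $F \colon c\Sd^2\Lambda^n[n] \to \D$, this lifting property extends $F$ to $\tilde F \colon c\Sd^2\Delta[n] \to \D$. Via the identification $c\Sd^2\Delta[n] = \K(c\Sd^2\del\Delta[n])$ from Remark~\ref{PropertiesK}, the inclusion $c\Sd^2\Lambda^n[n] \hookrightarrow c\Sd^2\Delta[n]$ factors through the subcategory $\K(c\Sd^2\Lambda^n[n])$, so restricting $\tilde F$ to $\K(c\Sd^2\Lambda^n[n])$ produces the desired extension.

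For the converse, first reduce to the case $i = n$ using the automorphisms of $\Delta[n]$, which carry $c\Sd^2\Lambda^i[n] \to c\Sd^2\Delta[n]$ isomorphically onto $c\Sd^2\Lambda^n[n] \to c\Sd^2\Delta[n]$. The key step is to construct a functor $r \colon c\Sd^2\Delta[n] \to \K(c\Sd^2\Lambda^n[n])$ that is the identity on the subcategory $\K(c\Sd^2\Lambda^n[n])$. Given such an $r$, any $F \colon c\Sd^2\Lambda^n[n] \to \D$ is first extended via the assumed lifting property to $G \colon \K(c\Sd^2\Lambda^n[n]) \to \D$, and then $G \circ r \colon c\Sd^2\Delta[n] \to \D$ is the desired extension of $F$.

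To define $r$, use the poset description of $c\Sd^2\Delta[n]$ from Remark~\ref{PropertiesK}, in which objects are strictly increasing chains $C$ of non-empty subsets of $\un$. An object belongs to $\K(c\Sd^2\Lambda^n[n])$ precisely when $\unn$ does not appear in the chain $C$, because chains in $c\Sd^2\Lambda^n[n]$ are contained in some face $\un \setminus i$ with $i \neq n$, which cannot contain the subset $\unn$. Declare $r(C) = C$ if $\unn \notin C$, and $r(C) = (C \setminus \{\unn\}) \cup \{\un\}$ if $\unn \in C$. By construction, $r$ lands in $\K(c\Sd^2\Lambda^n[n])$ and restricts to the identity on that subcategory. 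The main obstacle is the verification that $r$ preserves the partial order, which amounts to a short case analysis on whether $\unn$ appears in the source and target chains; once this is checked, the lemma follows formally from the two arguments above.
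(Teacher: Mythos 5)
Your proposal is correct and follows essentially the same route as the paper's proof: reduce to $i=n$ by an automorphism, identify $\K(c\Sd^2\Lambda^n[n])$ with the full subposet of chains not containing $\un\setminus n$, handle the forward direction by restriction, and handle the converse by the retraction that deletes $\un\setminus n$ and inserts $\un$. The order-preservation check you defer is exactly the two-case argument (on whether $\un\setminus n$ lies in the smaller chain) carried out in the paper, and it goes through as you claim.
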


\begin{pf}
 First, observe that for any $0\leq i\leq n$, there is an automorphism of the category $c\Sd^2\Delta[n]$ mapping the subcategory $c\Sd^2 \Lambda^i[n]$ isomorphically to $c\Sd^2 \Lambda^n[n]$. Therefore, it is enough to consider $i=n$. 
 
 Next, we observe that $\K(c\Sd^2 \Lambda^n[n])$ is isomorphic to a full subposet $\mathcal{P}$ of $c\Sd^2 \Delta[n]$. This follows from Remark \ref{PropertiesK} as $c\Sd^2\Lambda^n[n]$ is a subposet of $c\Sd^2\del\Delta[n]$. More explicitly, the subposet $c\Sd^2\Lambda^n[n]$ of $c\Sd^2\Delta[n]$ consists of all those sequences $v_0\subsetneq \ldots \subsetneq v_{m}$ for which $v_m \neq \un$ and $v_m\neq \underline{n-1}$. The subposet $\mathcal{P}$ of $c\Sd^2\Delta[n]$ contains all sequences $v_0\subsetneq \ldots \subsetneq v_{m}$ in $c\Sd^2\Lambda^n[n]$, for each such sequence also the sequence $v_0\subsetneq \ldots \subsetneq v_{m}\subsetneq \un$, and finally the sequence consisting only of $\un$ (corresponding to $k\in\K(c\Sd^2 \Lambda^n[n]))$. 
 
This implies immediately that having the lifting property for all morphisms $c\Sd^2 \Lambda^i[n] \to c\Sd^2 \Delta[n]$ implies the lifting property for all morphisms $c\Sd^2 \Lambda^n[n] \to \K(c\Sd^2 \Lambda^n[n])$. 
 
For the other implication, it is enough to show that each morphism defined on $\mathcal{P}$ can be extended to be defined on all of $c\Sd^2 \Delta[n]$. We will give a retraction for the inclusion of $\mathcal{P}$ into $c\Sd^2\Delta[n]$, i.e., an ordering-preserving map $c\Sd^2\Delta[n] \to \mathcal{P}$, which is identity on $\mathcal{P}$. This will complete the proof. Observe that the only objects of $c\Sd^2 \Delta[n]$ which are not in $\mathcal{P}$ are given by sequences in which $\underline{n}\setms n$ occurs; more precisely, these are the sequences $\un \setms n$, $\un \setms n \subsetneq \un$, and $w_0 \subsetneq \ldots \subsetneq w_{l} \subsetneq \un \setms n $ and $w_0 \subsetneq \ldots \subsetneq w_{l} \subsetneq \un \setms n \subsetneq \un$, where in the last two cases, $w_0 \subsetneq \ldots \subsetneq w_{l}$ is a sequence of non-empty subsets of $\un \setms n$.

The map $r\colon c\Sd^2\Delta[n] \to \mathcal{P}$ is described as follows:
\begin{eqnarray*}
 A\mapsto \begin{cases}
           A, &\mbox{ if } A \in \mathcal{P},\\
           \un, & \mbox{ if } A=\un \setms n \mbox{ or } \un \setms n \subsetneq \un, \\
           w_0 \subsetneq w_1\subsetneq \ldots \subsetneq w_l \subsetneq \un, &\mbox{ if } A=(w_0 \subsetneq w_1\subsetneq \ldots \subsetneq w_l \subsetneq \un \setms n) \mbox{ or }\\
           &A=(w_0 \subsetneq w_1\subsetneq \ldots \subsetneq w_l \subsetneq \un \setms n \subsetneq \un).
          \end{cases}
\end{eqnarray*}

Note that the assignment above covers all cases. Furthermore, the map takes only values in $\mathcal{P}$ and is by definition identity on $\mathcal{P}$. So the only thing to check is that $r$ is order-preserving. Note that the only changes to a sequence $A$ under $r$ is deleting the entry $\unn$ whenever it is present and if it was and $\un$ was not, adding $\un$. Now given $A\lneqq B$, if $A$ does not contain $\unn$, then it remains fixed under $r$ and deleting $\unn$ from $B$ or adding $\un$ to it does not change the relation, so in this case $r(A)\leq r(B)$. If $A$ does contain $\unn$, then so does $B$, and then both $r(A)$ and $r(B)$ do not contain $\unn$ and contain $\un$, while all the other entries remained unchanged, so we have again $r(A) \leq r(B)$, proving that $r$ is order-preserving. This completes the proof of the lemma. 
\end{pf}

\begin{example}
 We want to illustrate the procedure in the proof of the last lemma for the case of the 2-simplex. By definition, the poset $\mathcal{P}$ can be drawn as follows:
\begin{center}
 \begin{tikzpicture}[scale=0.8]
 \begin{scope}
 \coordinate (a1) at (0,0); 
 \coordinate (a2) at (7,0);
 \coordinate (a3) at (3.5,6.062);
 
\draw[fill](barycentric cs:a1=1,a2=0,a3=0) circle (2pt)node (b1){};
\draw[fill](barycentric cs:a1=0,a2=1,a3=0) circle (2pt)node (b2){};
\draw[fill](barycentric cs:a1=0,a2=0,a3=1.0) circle (2pt)node (b3){};
   
\draw[fill] (barycentric cs:a1=1,a2=1,a3=1) circle(2pt) node (b123){};

\draw[fill](barycentric cs:a2=1,a3=1) circle (2pt) node (b23){};
\draw[fill](barycentric cs:a1=1,a3=1) circle (2pt) node (b13){};

\draw[fill](barycentric cs:b1=1,b13=1) circle (2pt) node (b113){};

\draw[fill](barycentric cs:b3=1,b13=1) circle (2pt) node (b313){};
\draw[fill](barycentric cs:b2=1,b23=1) circle (2pt) node (b223){};
\draw[fill](barycentric cs:b3=1,b23=1) circle (2pt) node (b323){};


\draw[fill, blue](barycentric cs:a1=1,b123=1) circle (2pt) node (b1123){};
\draw[fill, blue](barycentric cs:a2=1,b123=1) circle (2pt) node (b2123){};
\draw[fill, blue](barycentric cs:a3=1,b123=1) circle (2pt) node (b3123){};

\draw[fill, blue](barycentric cs:b23=1,b123=1) circle (2pt) node (b23123){};
\draw[fill, blue](barycentric cs:b13=1,b123=1) circle (2pt) node (b13123){};

\draw[fill, blue](barycentric cs:b2=1,b23=1,b123=1) circle (2pt) node (b223123){};
\draw[fill, blue](barycentric cs:b3=1,b13=1,b123=1) circle (2pt) node (b313123){};

\draw[fill, blue](barycentric cs:b3=1,b23=1,b123=1) circle (2pt) node (b323123){};
\draw[fill, blue](barycentric cs:b1=1,b13=1,b123=1) circle (2pt) node (b113123){};

\draw[-stealth] (b1)--(b113);
\draw[-stealth] (b2)--(b223);
\draw[-stealth] (b3)--(b313);
\draw[-stealth] (b3)--(b323);
\draw[-stealth] (b1)--(b1123);
\draw[-stealth] (b2)--(b2123);
\draw[-stealth] (b3)--(b3123);

\draw[-stealth] (b123)--(b1123);
\draw[-stealth] (b123)--(b2123);
\draw[-stealth] (b123)--(b3123);
\draw[-stealth] (b123)--(b23123);
\draw[-stealth] (b123)--(b13123);

\draw[-stealth] (b23)--(b23123);
\draw[-stealth] (b13)--(b13123);
\draw[-stealth] (b13)--(b113);
\draw[-stealth] (b23)--(b223);
\draw[-stealth] (b13)--(b313);
\draw[-stealth] (b23)--(b323);

\draw[-stealth] (b223)--(b223123);
\draw[-stealth, blue] (b2123)--(b223123);
\draw[-stealth, blue] (b23123)--(b223123);

%

\draw[-stealth] (b113)--(b113123);
\draw[-stealth, blue] (b1123)--(b113123);
\draw[-stealth, blue] (b13123)--(b113123);

\draw[-stealth] (b313)--(b313123);
\draw[-stealth, blue] (b3123)--(b313123);
\draw[-stealth, blue] (b13123)--(b313123);

\draw[-stealth] (b323)--(b323123);
\draw[-stealth, blue] (b3123)--(b323123);
\draw[-stealth, blue] (b23123)--(b323123);
\end{scope}
 
 \end{tikzpicture}
 \end{center}
 
Moreover, we highlighted in blue the part of it identified with $c\Sd^2\Lambda^2[2] \times 1$ as part of $\K(c\Sd^2 \Lambda^2[2])$; the barycenter of the large triangle is identified with the additional point.

 Now the retraction $r$ maps the red dots here to the corresponding blue dot and the red arrows to identities of this object. 
 
  \begin{center}
 \begin{tikzpicture}[scale=0.8]
 \begin{scope}
 \coordinate (a1) at (0,0); 
 \coordinate (a2) at (7,0);
 \coordinate (a3) at (3.5,6.062);
 
\draw[fill](barycentric cs:a1=1,a2=0,a3=0) circle (2pt)node (b1){};
\draw[fill](barycentric cs:a1=0,a2=1,a3=0) circle (2pt)node (b2){};
\draw[fill](barycentric cs:a1=0,a2=0,a3=1.0) circle (2pt)node (b3){};
   
\draw[fill, blue] (barycentric cs:a1=1,a2=1,a3=1) circle(2pt) node (b123){};

\draw[fill, red](barycentric cs:a1=1,a2=1) circle (2pt) node (b12){};
\draw[fill](barycentric cs:a2=1,a3=1) circle (2pt) node (b23){};
\draw[fill](barycentric cs:a1=1,a3=1) circle (2pt) node (b13){};

\draw[fill, red](barycentric cs:b1=1,b12=1) circle (2pt) node (b112){};
\draw[fill, red](barycentric cs:b2=1,b12=1) circle (2pt) node (b212){};
\draw[fill](barycentric cs:b1=1,b13=1) circle (2pt) node (b113){};

\draw[fill](barycentric cs:b3=1,b13=1) circle (2pt) node (b313){};
\draw[fill](barycentric cs:b2=1,b23=1) circle (2pt) node (b223){};
\draw[fill](barycentric cs:b3=1,b23=1) circle (2pt) node (b323){};


\draw[fill, blue](barycentric cs:a1=1,b123=1) circle (2pt) node (b1123){};
\draw[fill,blue](barycentric cs:a2=1,b123=1) circle (2pt) node (b2123){};
\draw[fill](barycentric cs:a3=1,b123=1) circle (2pt) node (b3123){};

\draw[fill,red](barycentric cs:b12=1,b123=1) circle (2pt) node (b12123){};
\draw[fill](barycentric cs:b23=1,b123=1) circle (2pt) node (b23123){};
\draw[fill](barycentric cs:b13=1,b123=1) circle (2pt) node (b13123){};

\draw[fill,red](barycentric cs:b1=1,b12=1,b123=1) circle (2pt) node (b112123){};
\draw[fill](barycentric cs:b2=1,b23=1,b123=1) circle (2pt) node (b223123){};
\draw[fill](barycentric cs:b3=1,b13=1,b123=1) circle (2pt) node (b313123){};

\draw[fill,red](barycentric cs:b2=1,b12=1,b123=1) circle (2pt) node (b212123){};
\draw[fill](barycentric cs:b3=1,b23=1,b123=1) circle (2pt) node (b323123){};
\draw[fill](barycentric cs:b1=1,b13=1,b123=1) circle (2pt) node (b113123){};

\draw[-stealth] (b1)--(b112);
\draw[-stealth] (b1)--(b113);
\draw[-stealth] (b2)--(b212);
\draw[-stealth] (b2)--(b223);
\draw[-stealth] (b3)--(b313);
\draw[-stealth] (b3)--(b323);
\draw[-stealth] (b1)--(b1123);
\draw[-stealth] (b2)--(b2123);
\draw[-stealth] (b3)--(b3123);

\draw[-stealth] (b123)--(b1123);
\draw[-stealth] (b123)--(b2123);
\draw[-stealth] (b123)--(b3123);
\draw[-stealth, red] (b123)--(b12123);
\draw[-stealth] (b123)--(b23123);
\draw[-stealth] (b123)--(b13123);

\draw[-stealth, red] (b12)--(b12123);
\draw[-stealth] (b23)--(b23123);
\draw[-stealth] (b13)--(b13123);
\draw[-stealth] (b12)--(b112);
\draw[-stealth] (b13)--(b113);
\draw[-stealth] (b12)--(b212);
\draw[-stealth] (b23)--(b223);
\draw[-stealth] (b13)--(b313);
\draw[-stealth] (b23)--(b323);

\draw[-stealth] (b223)--(b223123);
\draw[-stealth] (b2123)--(b223123);
\draw[-stealth] (b23123)--(b223123);

 \draw[-stealth, red] (b212)--(b212123);
 \draw[-stealth, red] (b2123)--(b212123);
 \draw[-stealth] (b12123)--(b212123);
%
 \draw[-stealth, red] (b112)--(b112123);
 \draw[-stealth, red] (b1123)--(b112123);
 \draw[-stealth] (b12123)--(b112123);

\draw[-stealth] (b113)--(b113123);
\draw[-stealth] (b1123)--(b113123);
\draw[-stealth] (b13123)--(b113123);

\draw[-stealth] (b313)--(b313123);
\draw[-stealth] (b3123)--(b313123);
\draw[-stealth] (b13123)--(b313123);

\draw[-stealth] (b323)--(b323123);
\draw[-stealth] (b3123)--(b323123);
\draw[-stealth] (b23123)--(b323123);
\end{scope}
 \end{tikzpicture}
 \end{center}
 
 The ``horizontal'' arrows are sent to corresponding arrows between the blue points (and the vertices $1$ and $2$). 
\end{example}

\begin{Definition}Let $\DD$ be a category. A functor 
\begin{eqnarray*}
 \Phi\colon \Fun(\DD, \WW) \to \Fun(\K(\DD), \WW)
\end{eqnarray*}
 is called an \textbf{extension functor} (along $i$) if $i^*\circ \Phi = \id$ for the inclusion $i\colon \DD = \DD\times 0 \to \K(\DD)$. We will consider the following two properties of extension functors. 
 
\begin{description}[style=multiline, labelwidth=1.5cm]
    \item[\namedlabel{F1}{\textbf{(Cof)}}] An extension functor $\Phi$ is said to fulfill \ref*{F1} if for every functor $\alpha\colon \DD \to \WW$ and for every object $x\in \DD$, the morphism 
    \[\Phi(\alpha)(x\times (0\to 1))
    \]
 is a cofibration. 
    \item[\namedlabel{F3}{\textbf{(Lim)}}] An extension functor $\Phi$ is said to fulfill \ref*{F3} if for every functor $\alpha \colon \DD\to \WW$, the restriction $\Phi(\alpha)|_{\D^{\vartriangleleft}}$ is a limit diagram for $\Phi(\alpha)|_{\DD\times 1}$.
    
\end{description}

 \end{Definition}

The following auxiliary lemma is often convenient to show \ref{F3} and is not hard to prove.  
\begin{Lemma}\label{KLimit}
 Let $I$ be a category and $\beta \colon\K(I)\to \mathcal{D}$ some functor. Assume that $B_0=\lim_{I\times 0}\beta|_{I\times 0}$  and $B_1=\lim_{I\times 1}\beta|_{I\times 1}$ exist and denote by $g\colon B_0\to B_1$ the induced map. Furthermore, the compatible maps $\beta(k) \to \beta(i\times 1)$ for all objects $i\in I$ induce a map $h\colon\beta(k)\to B_1$. Assume the diagram
 \begin{eqnarray*}
  B_0\xrightarrow{g} B_1 \xleftarrow{h} \beta(k)
 \end{eqnarray*}
has a pullback $P$. Then the diagram $\beta \colon\K(I)\to \mathcal{D}$ has a limit and $P\cong \lim_{\K(I)} \beta$. Moreover, the projections from $P$ to objects of $\K(I)$ factor through $B_0$, $B_1$ or $\beta(k)$, respectively. 
\end{Lemma}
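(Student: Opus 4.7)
The plan is to exhibit $P$ together with a canonical cone on $\beta$ and verify its universal property by reducing to the three ``pieces'' into which $\K(I)$ decomposes. Recall that the objects of $\K(I)$ are $(i,0)$, $(i,1)$ for $i \in I$, and $k$, and that apart from morphisms internal to $I\times 0$ and $I\times 1$, the only non-identity morphisms are the gluing morphisms $(i,0)\to (i,1)$ and $k\to (i,1)$.

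First I would define the projections from $P$ to the objects of $\K(I)$. Let $p_0\colon P\to B_0$ and $p_k\colon P\to \beta(k)$ be the two legs of the pullback, so that $g\circ p_0 = h\circ p_k$; write $p_1\colon P\to B_1$ for this common composite. Then set
\[
\pi_{(i,0)} := (P\xrightarrow{p_0} B_0\to \beta(i,0)),\qquad \pi_{(i,1)} := (P\xrightarrow{p_1} B_1\to \beta(i,1)),\qquad \pi_k := p_k.
\]
The ``Moreover'' clause is built into this definition, so it only remains to check that these maps form a cone and that this cone is universal.

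Next I would check cone compatibility by cases. Morphisms within $I\times 0$ (resp.\ $I\times 1$) are handled immediately by the fact that $B_0\to \beta(i,0)$ (resp.\ $B_1\to \beta(i,1)$) is a limit cone. For a gluing morphism $(i,0)\to (i,1)$, the definition of $g$ as the induced map on limits gives $B_0\to\beta(i,0)\to\beta(i,1) = B_0\xrightarrow{g} B_1\to\beta(i,1)$; precomposing with $p_0$ and using $p_1 = g\circ p_0$ yields the needed equality. For $k\to (i,1)$, the definition of $h$ gives $\beta(k)\to\beta(i,1) = \beta(k)\xrightarrow{h} B_1\to\beta(i,1)$; precomposing with $p_k$ and using $p_1 = h\circ p_k$ finishes the verification.

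For universality, given any cone $(C,\{\gamma_x\colon C\to \beta(x)\}_{x\in \K(I)})$, the restrictions to $I\times 0$ and $I\times 1$ supply unique $\varphi_0\colon C\to B_0$ and $\varphi_1\colon C\to B_1$, and $\gamma_k\colon C\to \beta(k)$ is given. I would check that $g\circ \varphi_0 = h\circ \gamma_k = \varphi_1$: both sides, post-composed with any projection $B_1\to \beta(i,1)$, equal $\gamma_{(i,1)}$ (for the left by applying the cone relation to $(i,0)\to(i,1)$, for the right by applying it to $k\to(i,1)$), so uniqueness of $\varphi_1$ forces the equality. Hence $(\varphi_0,\gamma_k)$ induces a unique map $C\to P$ compatible with $p_0$ and $p_k$, and one checks that this map is also compatible with all the projections $\pi_x$. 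Uniqueness of the factorization is immediate since any candidate is determined by its composites with $p_0$ and $p_k$, which in turn are forced by $\gamma_{(i,0)}$ ($i\in I$) and $\gamma_k$.

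There is no real obstacle here; the only mildly delicate point is the coequalization $g\circ \varphi_0 = h\circ \gamma_k$, which is where the hypothesis that the original data forms a cone over all of $\K(I)$ (not just the two copies of $I$ separately) enters, and where the universal property of $B_1$ is invoked to reduce the check to the projections to each $\beta(i,1)$.
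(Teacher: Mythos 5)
Your proof is correct. The paper does not actually supply an argument for this lemma (it is stated with the remark that it ``is not hard to prove''), and your write-up is exactly the standard verification the authors had in mind: define the cone through the legs $p_0$, $p_k$ and the common composite $p_1$, check compatibility on the generating morphisms of $\K(I)$ (the composites $(i,0)\to(j,1)$ then come for free), and reduce the universal property to the universal properties of $B_0$, $B_1$ and the pullback. The one point worth isolating is the one you flag yourself, namely that $g\circ\varphi_0=h\circ\gamma_k$ is detected by postcomposition with the limit projections of $B_1$; that is indeed the only place where the cone conditions over the gluing morphisms are genuinely used.
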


Recall that our goal is to construct an extension functor along \[c\Sd^2\Lambda^{n+1}[n+1] \to \K(c\Sd^2\Lambda^{n+1}[n+1]).\]
This will be glued from extension functors along
\[c\Sd^2\Delta[n] \to \K(c\Sd^2\Delta[n]),\]
 satisfying some boundary condition. These in turn will (inductively) be defined via extension functors along 
 \[c\Sd^2\Lambda^n[n] \to \K(c\Sd^2\Lambda^n[n]).\]
 As noted above, we can view the poset $\K(c\Sd^2\Lambda^n[n])$ as a subposet of $c\Sd^2\Delta[n]$ and our plan is first to define a nice extension functor along
\[\K(c\Sd^2\Lambda^{n}[n]) \to \K(\K(c\Sd^2\Lambda^{n}[n])).\]
 This works for any category $\DD$ instead of $c\Sd^2\Lambda^{n}[n]$. To distinguish the two ``directions'' of applying $\K$, we will denote them by $\K_h$ for horizontal and $\K_v$ for vertical one, so that we write $\K_v(\K_h(\DD))$ instead of $\K(\K(\DD))$. We will write short $k_v$ for $k_{\K_h(\DD)}$. 
 
 \begin{Lemma}\label{DoubleK}
  Let $\D$ be a category with a given extension functor 
  \begin{eqnarray*}
   \Phi\colon \Fun(\DD, \WW) \to \Fun(\K(\DD), \WW).
  \end{eqnarray*}
 Assume that $\Phi$ satisfies \ref{F1} and \ref{F3}. Then there is an extension functor 
\[\Phi' \colon \Fun(\K_h(\D), \WW) \to \Fun(\K_v(\K_h(\D)), \WW)\]
 satisfying \ref{F1} and \ref{F3} and a natural transformation 
\[\varepsilon \colon \K_v(i)^*\circ\Phi' \Rightarrow \Phi\circ i^*,\] 
where $i$ now denotes the inclusion $\D \to \K_h(\D)$, with the following property: 
For any $\alpha \colon \K_h(\D) \to \WW$, we have $\Phi(\alpha|_{\D})|_{\D\times (0\to 1)_v}=\Phi'(\alpha)|_{\D\times (0\to 1)_v}$ and $\varepsilon$ induces the identity between them and $\varepsilon_{k_v}(\alpha)$ is a fibration. In other words: $\Phi'(\alpha)$ agrees with $\Phi(\alpha|_\DD)$ where possible except at $k_v$ and we have a compatible fibration $\Phi'(\alpha)(k_v) \to \Phi(\alpha|_\DD)(k_v)$. 
 \end{Lemma}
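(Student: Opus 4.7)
My plan is to extend $\alpha\colon \K_h(\D)\to\WW$ to $\K_v(\K_h(\D))$ by applying $\Phi$ separately to the two ``slices'' $\alpha_i:=\alpha|_{\D\times i_h}$ (for $i=0,1$) and stitching the results together using the functoriality of $\Phi$ on the natural transformation $\alpha_0\to\alpha_1$ encoded by $\alpha|_{\D\times(0\to 1)_h}$. Set $\beta_i := \Phi(\alpha_i)\in\Fun(\K(\D),\WW)$; by functoriality of $\Phi$, the arrow $\alpha_0\to\alpha_1$ in $\Fun(\D,\WW)$ induces a natural transformation $\beta_0\to\beta_1$. I would declare $\Phi'(\alpha)=\alpha$ on $\K_h(\D)\times 0_v$, and $\Phi'(\alpha)(x\times i_h\times 1_v):=\beta_i(x,1)$ on $\D\times i_h\times 1_v$, with $(0\to 1)_v$-morphisms given by $\beta_i$'s $(0\to 1)$-structure and $(0\to 1)_h$-morphisms at level $1_v$ given by the components of $\beta_0\to\beta_1$. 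This already forces the claimed agreement with $\Phi(\alpha|_\D)$ on $\D\times(0\to 1)_v$, and \ref{F1} for $\Phi'$ holds on objects of $\D$ by \ref{F1} for $\Phi$.

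The interesting choices are at $k_h\times 1_v$ and at $k_v$. The cone $\alpha(k_h)\to\alpha_1$ inside $\alpha$, postcomposed with $\beta_1$'s $(0\to 1)$-morphisms, assembles into a cone over $\beta_1|_{\D\times 1}$, which by \ref{F3} for $\Phi$ factors uniquely through $\beta_1(k)$; call this map $\gamma\colon\alpha(k_h)\to\beta_1(k)$. Applying $2$-out-of-$3$ (easily deduced from the $2$-out-of-$6$ property of $\WW$) to the triangle $\alpha(k_h)\xrightarrow{\gamma}\beta_1(k)\to\beta_1(x,1)$ shows $\gamma\in\WW$, so the functorial factorization $F=(F_c,F_f)$ writes $\gamma=f\circ c$ with $c\colon\alpha(k_h)\to C$ a cofibration and $f\colon C\to\beta_1(k)$ a fibration. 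I would then set $\Phi'(\alpha)(k_h\times 1_v):=C$, use $c$ as the $(0\to 1)_v$-morphism at $k_h$ (settling \ref{F1} at the final object), and define the morphisms from $C$ to the objects at level $1_v$ by composing $f$ with the canonical maps $\beta_1(k)\to\beta_1(x,1)$. Finally, define
\[
\Phi'(\alpha)(k_v):=\beta_0(k)\times_{\beta_1(k)} C,
\]
which exists by axiom~(3) of a partial model category since $f$ is a fibration, and let $\varepsilon_{k_v}$ be the pullback projection to $\beta_0(k)=\Phi(\alpha|_\D)(k)$, which is a fibration as the pullback of $f$; on the remainder of $\K(\D)$ the natural transformation $\varepsilon$ is the identity.

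The verification then splits into several pieces: functoriality of $\Phi'$ in $\alpha$ follows from the functoriality of $\Phi$, of $F$, and from a coherent choice of pullbacks along fibrations; the extension property (restriction to $\K_h(\D)\times 0_v$ equals $\alpha$) is automatic; and \ref{F3} for $\Phi'$ is delivered by Lemma~\ref{KLimit} applied with $I=\D$, whose formula for the limit of $\Phi'(\alpha)|_{\K_h(\D)\times 1_v}$ is exactly the pullback $\beta_0(k)\times_{\beta_1(k)}C$ chosen above (using \ref{F3} for $\Phi$ to identify $\lim_\D\beta_i|_{\D\times 1}=\beta_i(k)$). Naturality of $\varepsilon$ at morphisms $k\to(x,1)$ in $\K(\D)$ is automatic from the universal property of the pullback. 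The main technical obstacle I anticipate is the bookkeeping needed to verify that every newly introduced morphism lies in $\WW$—for example, the second pullback projection $\Phi'(\alpha)(k_v)\to C$ must be a weak equivalence, which I would check by $2$-out-of-$3$ applied to $\Phi'(\alpha)(k_v)\to C\xrightarrow{f}\beta_1(k)$: the outer composite factors through $\beta_0(k)$ by the pullback property (and is in $\WW$ since the first projection to $\beta_0(k)$ is in $\F\subset\WW$ and $\beta_0\to\beta_1$ has components in $\WW$), while $f\in\F\subset\WW$.
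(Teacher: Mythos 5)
Your construction is correct and is essentially identical to the paper's proof: both stitch together $\Phi(\alpha|_{\D\times 0_h})$ and $\Phi(\alpha|_{\D\times 1_h})$ along the natural transformation induced by $\alpha|_{\D\times(0\to 1)_h}$, obtain the map at $k_h$ from the limit property \ref{F3}, factor it functorially into a cofibration followed by a fibration to fix \ref{F1}, define the value at $k_v$ as the pullback along that fibration, and verify \ref{F3} via Lemma \ref{KLimit}. Your extra checks that the induced maps lie in $\WW$ (via $2$-out-of-$3$ deduced from $2$-out-of-$6$) are points the paper leaves implicit, but do not change the argument.
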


 \begin{pf}
   To construct $\Phi'$, we proceed in several steps. First, we construct out of $\Phi$ for each given $\alpha\colon \K_h(\D)\to \WW$ an intermediate extension $\alpha'$, then we improve it to an extension $\alpha''$ satisfying \ref{F1} and then we show it satisfies also $\ref{F3}$. The extension $\alpha''$ will be our $\Phi'(\alpha)$. 
 \begin{enumerate}[label=\textbf{Step \arabic{enumi}:}, ref=Step \arabic{enumi}]
  \item \label{LStep1}
   We can apply $\Phi$ to $\alpha|_{\D \times 0_h}$ and to $\alpha|_{\D \times 1_h}$. Since the images of $(0\to 1)_h$ induce under $\alpha$ a natural transformation between the two diagrams, we obtain a natural transformation 
	\[\Phi(\alpha|_{\D \times 0_h}) \Rightarrow \Phi(\alpha|_{\D \times 1_h}) .\]
	
	Define now $\alpha' \colon \K_v\K_h(\D) \to \mathcal{W}$ to be $\Phi(\alpha|_{\D \times 0_h})$ on $K_v(\D \times 0_h)$, furthermore $\Phi(\alpha|_{\D \times 1_h})$ on $\D \times 1_h \times (0\to 1)_v$ and the natural transformation discussed above on $\D \times (0\to 1)_h \times \{0,  1\}_v$. Moreover, define $\alpha'((k_h, 1_v))=\Phi(\alpha|_{\D \times 1_h})(k)$ and 
 \begin{eqnarray*}
  \alpha'(k_v\to (k_h,1_v))=\Phi(\alpha|_{\D \times 0_h} \Rightarrow \alpha|_{\D \times 1_h})(k)                
\end{eqnarray*}
 We still need to define $\alpha'(k_h \times (0\to 1)_v)$. Observe that we have compatible maps from $\alpha'((k_h, 0_v))=\alpha(k_h)$ to each object in 
 \begin{eqnarray*}
  \alpha'(\D \times 1_h \times 0_v)=\alpha(\D \times 1_h \times 0_v).
 \end{eqnarray*}
 We can compose these maps with maps given by $\alpha'(\D \times 1_h \times (0 \to 1)_v)$ to obtain compatible maps from $\alpha(k_h)$ to $\alpha'(\D \times 1_h \times 1_v)$. Since by assumption the functor $\Phi$ satisfies the property \ref{F3}, the limit of this last diagram is $\Phi(\alpha|_{\D \times 1_h})(k)=\alpha'((k_h, 1_v))$. Thus, there is a unique morphism $\alpha'((k_h, 0_v))\to \alpha'((k_h, 1_v))$ making all the relevant diagrams commute, so that we can define $\alpha'(k_h \times (0\to 1)_v)$ to be this morphism. Thus, we have extended $\alpha$ to $\K_v\K_h(\D)$. 
 
 \item \label{LStep2} Observe that all the maps of the form $x \times (0\to 1)_v$ for some object $x$ of $\K_h(\D)$ except for $x=k_h$ are mapped by $\alpha'$ to cofibrations due to property \ref{F1} of $\Phi$ and the construction in \ref{LStep1}. Since we need to fulfill \ref{F1} again, we want to replace $\alpha'(k_h\times (0\to 1)_v)$ by a cofibration. We start by functorially factorizing this morphism into a cofibration $g_1\colon A_1\to A_2$ followed by a fibration $g_2\colon A_2\to A_3$, so $\alpha'(k_h\times (0\to 1)_v)=g_2\circ g_1$. Now define $\alpha''\colon \K_v\K_h(\D)\to \mathcal{W}$ to coincide with $\alpha'$ everywhere except on objects $(k_h, 1_v)$ and $k_v$ and on morphisms starting or ending in these objects. Set $\alpha''((k_h, 1_v))$ to be $A_2$ and define $\alpha''(k_h \times (0\to 1)_v)=g_1$. For any map starting in $(k_h, 1_v)$, we precompose its image under $\alpha'$ with $g_2$ to obtain its image under $\alpha''$. Last, we have the problem that there is in general no map from $\alpha'
(
k_v)$ to $A_2$. But since we have a cospan 
\[\alpha'(k_v) \to \alpha'((k_h,1_v))=A_3 \xleftarrow{g_2} A_2\] and $g_2$ is a fibration, we can define $\alpha''(k_v)$ as its pullback. As a pullback of a fibration, the morphism $\alpha''(k_v) \to \alpha'(k_v)$ is a fibration. Note that since there are only morphisms starting in $k_v$ in $\K_v\K_h(\D)$, we can simply precompose the image of each such map under $\alpha'$ with the pullback projection morphism $\alpha''(k_v) \to \alpha'(k_v)$ to obtain the corresponding images unter $\alpha''$ (except for the morphism to $A_2$). This now defines a functor $\alpha''\colon \K_v\K_h(\D) \to \mathcal{W}$, and observe that by construction now all morphisms of the form $\alpha''(x\times (0\to 1)_v)$ are cofibrations. We set $\Phi'(\alpha)=\alpha''$ and obtain an extension functor that, due to modification in this step, satifies \ref{F1}. It is a functor since pullbacks and factorizations are functorial. 

Note that by definition, $\Phi'(\alpha)$ and $\Phi(\alpha|_{\D\times 0_h})$ coincide when restricted to $\D \times 0_h \times (0\to 1)_v$. Moreover, by 
construction, the map 
 \begin{eqnarray*}
  \Phi'(\alpha)(k_v)=\alpha''(k_v)\to \Phi(\alpha|_{\D\times 0_h})(k_v)=\alpha'(k_v)
 \end{eqnarray*}
 is a fibration.  Altogether, we already have constructed a natural transformation $\varepsilon$ as required. 
 
 \item \label{LStep3} To prove that $\Phi'$ satisfies \ref{F3}, we want to prove that $\alpha''(k_v)$ with the corresponding maps is a limit of $\alpha''|_{\K_h(\D) \times 1_v}$. 
 
 This is a consequence of Lemma \ref{KLimit}. Indeed, recall that on both $\D \times 0_h\times 1_v$ and $\D \times 1_h\times 1_v$, the functor $\alpha''$ coincides with $\alpha'$ and is given by 
 \begin{eqnarray*}
  \Phi(\alpha|_{\D \times 0_h})|_{\D \times 0_h\times 1_v} \mbox{ and }  \Phi(\alpha|_{\D\times 1_h})|_{\D \times 1_h\times 1_v},
 \end{eqnarray*}
respectively. By Property \ref{F3} of $\Phi$, both of these diagrams have a limit, namely $\alpha'(k_v)$ and $\alpha'((k_h, 1_v))$, respectively; the map 
\begin{eqnarray*}
 \alpha'(k_v \to (k_h, 1_v))
\end{eqnarray*}
 is exactly the one induced by maps of the form $x \times (0\to 1)_h$. Moreover, the map $g_2\colon \alpha''((k_h, 1_v))\to \alpha'((k_h, 1_v))$ is the induced map to the limit. Thus, by Lemma \ref{KLimit}, $\alpha''(k_v)$ is the limit of the diagram $\alpha''|_{\K_h(\D) \times 1_v}$. This completes the proof of the lemma.
 \end{enumerate}
  
 \end{pf}

Next, we want to prove that the category of weak equivalences of a given partial model category has the lifting property with respect to all inclusions $c\Sd^2 \Lambda^n[n] \to \K(c\Sd^2 \Lambda^n[n])$. More precisely, we will inductively prove the following statement: 
\begin{Theorem} \label{MainTheorem}
 Let $(\mathcal{M}, \mathcal{W})$ be a partial model category. Then for each $n$, there are extension functors 
 \begin{eqnarray*}
  \Phi_n \colon  \Fun(c\Sd^2 \Lambda^n[n], \mathcal{W})\to \Fun(\K(c\Sd^2 \Lambda^n[n]), \mathcal{W})
 \end{eqnarray*}
 and
 \begin{eqnarray*}
  \Psi_n\colon \Fun(c\Sd^2 \Delta[n], \mathcal{W})\to \Fun(\K(c\Sd^2 \Delta[n]), \mathcal{W})
 \end{eqnarray*}
 fulfilling \ref{F1} and \ref{F3} and the following additional boundary conditions:

 \begin{description}
  \item[\namedlabel{F4}{\textbf{(F1)}}] \label{F4}The order on the vertices of an $n$-simplex in the boundary of an ${(n+1)}$-simplex gives a distinguished way to identify it with the standard $\Delta ^n$. With this identification, we require that for any 
  \begin{eqnarray*}
  \alpha \colon c\Sd^2 \Lambda^{n+1}[n+1] \to \mathcal{W} 
  \end{eqnarray*}
 and for all $i\neq n$ we have  
  \begin{eqnarray*}
  \Phi_{n+1}(\alpha) |_{c\Sd^2 d_{i}\Delta^{n+1} \times (0\to 1)} =\Psi_n(\alpha|_{c\Sd^2 d_{i}\Delta^{n+1}})|_{c\Sd^2 d_{i}\Delta^{n+1} \times (0\to 1)} 
  \end{eqnarray*}
 and a compatible morphism 
 \begin{eqnarray*}
  \Phi_{n+1}(\alpha)(k)\to \Psi_n(\alpha|_{c\Sd^2 d_{i}\Delta^{n+1}})(k).  
 \end{eqnarray*}

\item[\namedlabel{F6}{\textbf{(F2)}}] \label{F6} Likewise, we require that for any $\alpha \colon c\Sd^2 \Delta[n] \to \WW$ we have
  \begin{eqnarray*}
  \Psi_n(\alpha) |_{c\Sd^2 \Lambda^n[n] \times (0\to 1)} =\Phi_n(\alpha|_{c\Sd^2 \Lambda^n[n]})|_{c\Sd^2 \Lambda^n[n] \times (0\to 1)} 
  \end{eqnarray*}
 and a compatible morphism 
 \begin{eqnarray*}
  \Psi_n(\alpha)(k)\to \Phi_n(\alpha|_{c\Sd^2 \Lambda^n[n]})(k).
 \end{eqnarray*}

\item[\namedlabel{F5}{\textbf{(F3)}}] \label{F5} Last, we require for any $\alpha \colon c\Sd^2 \Lambda^n[n] \to \mathcal{W}$ the morphism 
\[ \Phi_n(\alpha)(k) \to \Phi_n(\alpha)(\{n\}, 1) \]
 to be fibration. Here, $\{n\}$ stands for the length-$1$-chain of subsets of $\un$, corresponding to an object in $c\Sd^2\Lambda^n[n]$. Similarly, we want for each $\alpha \colon c\Sd^2 \Delta[n] \to \mathcal{W}$ the map 
\[\Psi_n(\alpha)(k) \to \Psi_n(\alpha)(\{n\}, 1)\]
 to be fibration. 
 \end{description}

\end{Theorem}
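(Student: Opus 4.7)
I would proceed by simultaneous induction on $n$, building $\Phi_n$ and $\Psi_n$ together with all the boundary and structural conditions. The base case for the smallest $n$ can be handled directly: $c\Sd^2\Lambda^n[n]$ and $c\Sd^2\Delta[n]$ are combinatorially small, and one uses the functorial factorization in $\WW$ to produce the required cofibrations for \ref{F1}, factorizations followed by pullbacks to arrange the fibration condition \ref{F5}, and honest pullbacks along fibrations (which exist in a partial model category) to make the apex into a genuine limit for \ref{F3}.

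The inductive step from $\Phi_n$ to $\Psi_n$ runs as follows. Given $\alpha\colon c\Sd^2\Delta[n]\to\WW$, restrict to the horn and apply $\Phi_n$ to get an extension over $\K(c\Sd^2\Lambda^n[n])$. By Lemma \ref{CrucialLemma} this identifies with a functor on the subposet $\mathcal{P}\subseteq c\Sd^2\Delta[n]$ consisting of horn sequences, their $\un$-extensions, and $\un$ itself. One then invokes Lemma \ref{DoubleK} with $\DD=c\Sd^2\Lambda^n[n]$ and extension functor $\Phi_n$ to produce values on $\K_v\K_h(c\Sd^2\Lambda^n[n])\subseteq\K(c\Sd^2\Delta[n])$, satisfying \ref{F1} and \ref{F3}. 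The objects of $c\Sd^2\Delta[n]$ outside $\mathcal{P}$ (those whose sequences involve $\unn$) are handled by using the retraction $r\colon c\Sd^2\Delta[n]\to\mathcal{P}$ from Lemma \ref{CrucialLemma} to transport values, refined by functorial factorization to maintain cofibrancy. The apex value $\Psi_n(\alpha)(k)$ is defined as the pullback of $\Phi_n(\alpha|_\Lambda)(k)\to\Phi_n(\alpha|_\Lambda)(\{n\},1)$, a fibration by \ref{F5}, along the map induced by the fresh values; Lemma \ref{KLimit} then yields \ref{F3}, pullback-stability of fibrations yields \ref{F5} for $\Psi_n$, and condition \ref{F6} holds tautologically.

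The inductive step from $\Psi_n$ to $\Phi_{n+1}$ is a gluing argument. For each face $d_i\Delta^{n+1}$ in $\Lambda^{n+1}[n+1]$ restrict $\alpha$ and apply $\Psi_n$, producing an extension on $\K(c\Sd^2 d_i\Delta^{n+1})$. On the intersection $c\Sd^2 d_id_j\Delta^{n+1}$ of two such faces, both extensions restrict on $(0\to 1)$-components to $\Phi_{n-1}$ applied to the horn of $d_id_j\Delta^{n+1}$ by \ref{F6} at level $n$, hence they agree and glue coherently. This produces $\Phi_{n+1}(\alpha)$ on $c\Sd^2\Lambda^{n+1}[n+1]\times(0\to 1)$, automatically satisfying \ref{F1} and \ref{F4}. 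The apex value $\Phi_{n+1}(\alpha)(k)$ is defined as an iterated pullback over the apex values $\Psi_n(\alpha|_{d_i})(k)$ via the fibrations guaranteed by \ref{F5} at level $n$; this simultaneously realizes \ref{F3} via Lemma \ref{KLimit} and yields the fibration of \ref{F5} for $\Phi_{n+1}$.

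The principal obstacle is coherence of the gluing for $\Phi_{n+1}$ together with the compatibility of the extension to $c\Sd^2\Delta[n]\setminus\mathcal{P}$ for $\Psi_n$: one must verify that the iterated pullbacks at the apex are taken along composable fibrations and that boundary maps supplied by condition \ref{F4} at the preceding level fit together strictly, not merely up to weak equivalence. The partial model category axioms — pushout-stability of cofibrations, pullback-stability of fibrations, functorial factorization of weak equivalences, and $2$-out-of-$6$ — ensure at each step that the constructed morphisms remain weak equivalences, while Lemma \ref{KLimit} reduces the limit verification for \ref{F3} to a tractable pullback. Having produced $\Phi_n$ for all $n$, the main theorem follows by combining Lemma \ref{CrucialLemma} with the existence of $\Phi_n$, since taking $\alpha=\id$-style restrictions of a given horn $c\Sd^2\Lambda^n[n]\to\WW$ and applying $\Phi_n$ provides the required lifts against all $c\Sd^2\Lambda^n[n]\to c\Sd^2\Delta[n]$.
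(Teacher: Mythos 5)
Your overall skeleton---a simultaneous induction, Lemma \ref{DoubleK} applied with $\DD=c\Sd^2\Lambda^n[n]$ to handle the subposet $\mathcal{P}\cong\K(c\Sd^2\Lambda^n[n])$, and a gluing of copies of $\Psi_n$ over the faces of the horn followed by an iterated pullback at the apex to produce $\Phi_{n+1}$---agrees with the paper. But the passage from $\K_v(\mathcal{P})$ to all of $\K(c\Sd^2\Delta[n])$ has a genuine gap. You propose to handle the objects of $c\Sd^2\Delta[n]$ outside $\mathcal{P}$ (the sequences containing $\unn$) by transporting values along the retraction $r$ of Lemma \ref{CrucialLemma}. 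That retraction suffices for the bare lifting problem, where the lift is unconstrained away from the horn; it does not suffice here, because $\Psi_n$ must be an \emph{extension} functor, so its values on all of $c\Sd^2\Delta[n]\times 0$ --- including on the objects containing $\unn$ --- are already prescribed by $\alpha$ and are not the values at $r(x)$. Worse, $x$ and $r(x)$ are in general incomparable in the poset (for $x=\unn$ one has $r(x)=\un$, and neither chain contains the other), so there is no morphism $\alpha(x)\to \Phi_n(\cdot)(r(x),1)$ to serve as the image of $x\times(0\to 1)$, and ``refining by functorial factorization'' has nothing to factorize. Likewise, your apex $\Psi_n(\alpha)(k)$, defined by pulling back $\Phi_n(\alpha|_{\Lambda})(k)\to\Phi_n(\alpha|_{\Lambda})(\{n\},1)$, cannot receive compatible maps from, nor be the limit of, the part of the $1$-level lying over the complement of $\mathcal{P}$, so \ref{F3} fails for it.

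The paper closes exactly this gap by a different mechanism, and it is here that the pushout axiom for cofibrations is essential: the complement of $\mathcal{P}$ is identified with $c\Sd^2\Delta[n-1]\times 1_h$ together with the barycenter $\unn$, and for each object $x$ of $c\Sd^2\Delta[n-1]$ one forms the pushout of the cofibration $\alpha'(x\times 0_h\times(0\to 1)_v)$ along $\alpha(x\times(0\to 1)_h\times 0_v)$. Even this is not enough: the naive pushout extension need not satisfy the limit condition \ref{F3}, so the resulting functor $\gamma$ on $c\Sd^2\Delta[n-1]$ is corrected by applying $\Psi_{n-1}$ to it, which also supplies the value at the barycenter after a functorial factorization and a further pullback. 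This exposes a bookkeeping error in your induction: the construction of $\Psi_n$ consumes $\Psi_{n-1}$ as well as $\Phi_n$, whereas your scheme builds $\Psi_n$ from $\Phi_n$ alone and therefore has no tool with which to make the complement of $\mathcal{P}$ satisfy \ref{F1} and \ref{F3} simultaneously. Your account of the step $\Psi_n\Rightarrow\Phi_{n+1}$ and of the reduction of the Main Theorem to the existence of the $\Phi_n$ via Lemma \ref{CrucialLemma} is essentially correct.
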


Before proving the theorem, we would like to remark the following: All we need to apply Lemma \ref{CrucialLemma} to deduce our main theorem is that $\Phi$ is an extension functor. All the other properties are just for the purposes of the induction.

\begin{pf}
As we already mentioned, the proof works by induction. We will explain first the plan of the proof before we give the details. 

Given $\Phi_{n}$ and $\Psi_{n-1}$, we construct first $\Psi_n$. Recall that we identified a part (denoted by $\mathcal{P}$ and highlighted blue in the picture below) of $c\Sd^2 \Delta[n]$ with $\K(c\Sd^2\Lambda^n[n])$. Having again two applications of $\K$ around now, we want to call this one ``horizontal'' and denote it by $\K_h$, while the other one is called vertical and denoted by $\K_v$.  To construct $\Psi_n$, we start with a functor $\alpha\colon c\Sd^2 \Delta[n] \to \mathcal{W}$ and extend its restriction $\alpha|_\mathcal{P}$ first to $\K_v(\mathcal{P})\subset \K_v(c\Sd^2 \Delta[n])$ using Lemma \ref{DoubleK}. This is the main point of \ref{Step1}. 
\begin{center}
 \begin{tikzpicture}[scale=0.8]
 \begin{scope}
 \coordinate (a1) at (0,0); 
 \coordinate (a2) at (7,0);
 \coordinate (a3) at (3.5,6.062);
 
\draw[fill, blue](barycentric cs:a1=1,a2=0,a3=0) circle (2pt)node (b1){};
\draw[fill, blue](barycentric cs:a1=0,a2=1,a3=0) circle (2pt)node (b2){};
\draw[fill, blue](barycentric cs:a1=0,a2=0,a3=1.0) circle (2pt)node (b3){};
   
\draw[fill, blue] (barycentric cs:a1=1,a2=1,a3=1) circle(2pt) node (b123){};

\draw[fill](barycentric cs:a1=1,a2=1) circle (2pt) node (b12){};
\draw[fill, blue](barycentric cs:a2=1,a3=1) circle (2pt) node (b23){};
\draw[fill, blue](barycentric cs:a1=1,a3=1) circle (2pt) node (b13){};

\draw[fill](barycentric cs:b1=1,b12=1) circle (2pt) node (b112){};
\draw[fill](barycentric cs:b2=1,b12=1) circle (2pt) node (b212){};
\draw[fill, blue](barycentric cs:b1=1,b13=1) circle (2pt) node (b113){};

\draw[fill, blue](barycentric cs:b3=1,b13=1) circle (2pt) node (b313){};
\draw[fill, blue](barycentric cs:b2=1,b23=1) circle (2pt) node (b223){};
\draw[fill, blue](barycentric cs:b3=1,b23=1) circle (2pt) node (b323){};


\draw[fill, blue](barycentric cs:a1=1,b123=1) circle (2pt) node (b1123){};
\draw[fill, blue](barycentric cs:a2=1,b123=1) circle (2pt) node (b2123){};
\draw[fill, blue](barycentric cs:a3=1,b123=1) circle (2pt) node (b3123){};

\draw[fill](barycentric cs:b12=1,b123=1) circle (2pt) node (b12123){};
\draw[fill, blue](barycentric cs:b23=1,b123=1) circle (2pt) node (b23123){};
\draw[fill, blue](barycentric cs:b13=1,b123=1) circle (2pt) node (b13123){};

\draw[fill](barycentric cs:b1=1,b12=1,b123=1) circle (2pt) node (b112123){};
\draw[fill, blue](barycentric cs:b2=1,b23=1,b123=1) circle (2pt) node (b223123){};
\draw[fill, blue](barycentric cs:b3=1,b13=1,b123=1) circle (2pt) node (b313123){};

\draw[fill](barycentric cs:b2=1,b12=1,b123=1) circle (2pt) node (b212123){};
\draw[fill, blue](barycentric cs:b3=1,b23=1,b123=1) circle (2pt) node (b323123){};
\draw[fill, blue](barycentric cs:b1=1,b13=1,b123=1) circle (2pt) node (b113123){};

\draw[-stealth] (b1)--(b112);
\draw[-stealth, blue] (b1)--(b113);
\draw[-stealth] (b2)--(b212);
\draw[-stealth, blue] (b2)--(b223);
\draw[-stealth, blue] (b3)--(b313);
\draw[-stealth, blue] (b3)--(b323);
\draw[-stealth, blue] (b1)--(b1123);
\draw[-stealth, blue] (b2)--(b2123);
\draw[-stealth, blue] (b3)--(b3123);

\draw[-stealth, blue] (b123)--(b1123);
\draw[-stealth, blue] (b123)--(b2123);
\draw[-stealth, blue] (b123)--(b3123);
\draw[-stealth] (b123)--(b12123);
\draw[-stealth, blue] (b123)--(b23123);
\draw[-stealth, blue] (b123)--(b13123);

\draw[-stealth] (b12)--(b12123);
\draw[-stealth, blue] (b23)--(b23123);
\draw[-stealth, blue] (b13)--(b13123);
\draw[-stealth] (b12)--(b112);
\draw[-stealth, blue] (b13)--(b113);
\draw[-stealth] (b12)--(b212);
\draw[-stealth, blue] (b23)--(b223);
\draw[-stealth, blue] (b13)--(b313);
\draw[-stealth, blue] (b23)--(b323);

\draw[-stealth, blue] (b223)--(b223123);
\draw[-stealth, blue] (b2123)--(b223123);
\draw[-stealth, blue] (b23123)--(b223123);

\draw[-stealth] (b212)--(b212123);
\draw[-stealth] (b2123)--(b212123);
\draw[-stealth] (b12123)--(b212123);

\draw[-stealth] (b112)--(b112123);
\draw[-stealth] (b1123)--(b112123);
\draw[-stealth] (b12123)--(b112123);

\draw[-stealth, blue] (b113)--(b113123);
\draw[-stealth, blue] (b1123)--(b113123);
\draw[-stealth, blue] (b13123)--(b113123);

\draw[-stealth, blue] (b313)--(b313123);
\draw[-stealth, blue] (b3123)--(b313123);
\draw[-stealth, blue] (b13123)--(b313123);

\draw[-stealth, blue] (b323)--(b323123);
\draw[-stealth, blue] (b3123)--(b323123);
\draw[-stealth, blue] (b23123)--(b323123);
\end{scope}
 
 \end{tikzpicture}
 \end{center}

We claim that we can consider the subdivided simplex $c\Sd^2\Delta[n]$ as glued from two pieces, one of which is $\mathcal{P}$ identified with $\K_h(c\Sd^2\Lambda^n[n])$. A part of $\mathcal{P}$ is constituted by $\K_h(c\Sd^2\partial d_n\Delta[n])$. Here, $c\Sd^2\del d_n\Delta[n] \times 0_h$ corresponds to sequences $v_0\subsetneq \ldots \subsetneq v_m$ of non-empty subsets of $\unn$ not containing $\unn$ itself. The $c\Sd^2\del d_n\Delta[n] \times 1_h$-part consists of sequences  $v_0\subsetneq \ldots \subsetneq v_m\subset \un$, where $v_0\subsetneq \ldots \subsetneq v_m$ is as in the last sentence. The additional point $k$ is identified with the barycenter $\un$. On the other hand, $\K_{h}(c \Sd^2\partial d_n\Delta[n])$ can be identified with a double subdivision of an $(n-1)$-simplex. In the case of $n=2$, $cSd^2 d_n\Delta[n]$ is marked in green (and red) and $c \Sd^2\partial d_n\Delta[n]$ in red in the picture below; the latter consists only of two $0$-simplices. 

\begin{center}
 \begin{tikzpicture}[scale=0.8]
 \begin{scope}
 \coordinate (a1) at (0,0); 
 \coordinate (a2) at (7,0);
 \coordinate (a3) at (3.5,6.062);
 
\draw[fill, red](barycentric cs:a1=1,a2=0,a3=0) circle (2pt)node (b1){};
\draw[fill, red](barycentric cs:a1=0,a2=1,a3=0) circle (2pt)node (b2){};
\draw[fill](barycentric cs:a1=0,a2=0,a3=1.0) circle (2pt)node (b3){};
   
\draw[fill] (barycentric cs:a1=1,a2=1,a3=1) circle(2pt) node (b123){};

\draw[fill, green](barycentric cs:a1=1,a2=1) circle (2pt) node (b12){};
\draw[fill](barycentric cs:a2=1,a3=1) circle (2pt) node (b23){};
\draw[fill](barycentric cs:a1=1,a3=1) circle (2pt) node (b13){};

\draw[fill, green](barycentric cs:b1=1,b12=1) circle (2pt) node (b112){};
\draw[fill, green](barycentric cs:b2=1,b12=1) circle (2pt) node (b212){};
\draw[fill](barycentric cs:b1=1,b13=1) circle (2pt) node (b113){};

\draw[fill](barycentric cs:b3=1,b13=1) circle (2pt) node (b313){};
\draw[fill](barycentric cs:b2=1,b23=1) circle (2pt) node (b223){};
\draw[fill](barycentric cs:b3=1,b23=1) circle (2pt) node (b323){};


\draw[fill](barycentric cs:a1=1,b123=1) circle (2pt) node (b1123){};
\draw[fill](barycentric cs:a2=1,b123=1) circle (2pt) node (b2123){};
\draw[fill](barycentric cs:a3=1,b123=1) circle (2pt) node (b3123){};

\draw[fill](barycentric cs:b12=1,b123=1) circle (2pt) node (b12123){};
\draw[fill](barycentric cs:b23=1,b123=1) circle (2pt) node (b23123){};
\draw[fill](barycentric cs:b13=1,b123=1) circle (2pt) node (b13123){};

\draw[fill](barycentric cs:b1=1,b12=1,b123=1) circle (2pt) node (b112123){};
\draw[fill](barycentric cs:b2=1,b23=1,b123=1) circle (2pt) node (b223123){};
\draw[fill](barycentric cs:b3=1,b13=1,b123=1) circle (2pt) node (b313123){};

\draw[fill](barycentric cs:b2=1,b12=1,b123=1) circle (2pt) node (b212123){};
\draw[fill](barycentric cs:b3=1,b23=1,b123=1) circle (2pt) node (b323123){};
\draw[fill](barycentric cs:b1=1,b13=1,b123=1) circle (2pt) node (b113123){};

\draw[-stealth, green] (b1)--(b112);
\draw[-stealth] (b1)--(b113);
\draw[-stealth, green] (b2)--(b212);
\draw[-stealth] (b2)--(b223);
\draw[-stealth] (b3)--(b313);
\draw[-stealth] (b3)--(b323);
\draw[-stealth] (b1)--(b1123);
\draw[-stealth] (b2)--(b2123);
\draw[-stealth] (b3)--(b3123);

\draw[-stealth] (b123)--(b1123);
\draw[-stealth] (b123)--(b2123);
\draw[-stealth] (b123)--(b3123);
\draw[-stealth] (b123)--(b12123);
\draw[-stealth] (b123)--(b23123);
\draw[-stealth] (b123)--(b13123);

\draw[-stealth] (b12)--(b12123);
\draw[-stealth] (b23)--(b23123);
\draw[-stealth] (b13)--(b13123);
\draw[-stealth, green] (b12)--(b112);
\draw[-stealth] (b13)--(b113);
\draw[-stealth, green] (b12)--(b212);
\draw[-stealth] (b23)--(b223);
\draw[-stealth] (b13)--(b313);
\draw[-stealth] (b23)--(b323);

\draw[-stealth] (b223)--(b223123);
\draw[-stealth] (b2123)--(b223123);
\draw[-stealth] (b23123)--(b223123);

\draw[-stealth] (b212)--(b212123);
\draw[-stealth] (b2123)--(b212123);
\draw[-stealth] (b12123)--(b212123);

\draw[-stealth] (b112)--(b112123);
\draw[-stealth] (b1123)--(b112123);
\draw[-stealth] (b12123)--(b112123);

\draw[-stealth] (b113)--(b113123);
\draw[-stealth] (b1123)--(b113123);
\draw[-stealth] (b13123)--(b113123);

\draw[-stealth] (b313)--(b313123);
\draw[-stealth] (b3123)--(b313123);
\draw[-stealth] (b13123)--(b313123);

\draw[-stealth] (b323)--(b323123);
\draw[-stealth] (b3123)--(b323123);
\draw[-stealth] (b23123)--(b323123);
\end{scope}
 
 \end{tikzpicture}
 \end{center}

Note that the $\K_{h}(c \Sd^2\partial d_n\Delta[n])$-part of $\mathcal{P}$ is not this green simplex, but merely the red highlighted part in the following picture:

\begin{center}
 \begin{tikzpicture}[scale=0.8]
 \begin{scope}
 \coordinate (a1) at (0,0); 
 \coordinate (a2) at (7,0);
 \coordinate (a3) at (3.5,6.062);
 
\draw[fill, red](barycentric cs:a1=1,a2=0,a3=0) circle (2pt)node (b1){};
\draw[fill, red](barycentric cs:a1=0,a2=1,a3=0) circle (2pt)node (b2){};
\draw[fill](barycentric cs:a1=0,a2=0,a3=1.0) circle (2pt)node (b3){};
   
\draw[fill, red] (barycentric cs:a1=1,a2=1,a3=1) circle(2pt) node (b123){};

\draw[fill](barycentric cs:a1=1,a2=1) circle (2pt) node (b12){};
\draw[fill](barycentric cs:a2=1,a3=1) circle (2pt) node (b23){};
\draw[fill](barycentric cs:a1=1,a3=1) circle (2pt) node (b13){};

\draw[fill](barycentric cs:b1=1,b12=1) circle (2pt) node (b112){};
\draw[fill](barycentric cs:b2=1,b12=1) circle (2pt) node (b212){};
\draw[fill](barycentric cs:b1=1,b13=1) circle (2pt) node (b113){};

\draw[fill](barycentric cs:b3=1,b13=1) circle (2pt) node (b313){};
\draw[fill](barycentric cs:b2=1,b23=1) circle (2pt) node (b223){};
\draw[fill](barycentric cs:b3=1,b23=1) circle (2pt) node (b323){};


\draw[fill, red](barycentric cs:a1=1,b123=1) circle (2pt) node (b1123){};
\draw[fill, red](barycentric cs:a2=1,b123=1) circle (2pt) node (b2123){};
\draw[fill](barycentric cs:a3=1,b123=1) circle (2pt) node (b3123){};

\draw[fill](barycentric cs:b12=1,b123=1) circle (2pt) node (b12123){};
\draw[fill](barycentric cs:b23=1,b123=1) circle (2pt) node (b23123){};
\draw[fill](barycentric cs:b13=1,b123=1) circle (2pt) node (b13123){};

\draw[fill](barycentric cs:b1=1,b12=1,b123=1) circle (2pt) node (b112123){};
\draw[fill](barycentric cs:b2=1,b23=1,b123=1) circle (2pt) node (b223123){};
\draw[fill](barycentric cs:b3=1,b13=1,b123=1) circle (2pt) node (b313123){};

\draw[fill](barycentric cs:b2=1,b12=1,b123=1) circle (2pt) node (b212123){};
\draw[fill](barycentric cs:b3=1,b23=1,b123=1) circle (2pt) node (b323123){};
\draw[fill](barycentric cs:b1=1,b13=1,b123=1) circle (2pt) node (b113123){};

\draw[-stealth] (b1)--(b112);
\draw[-stealth] (b1)--(b113);
\draw[-stealth] (b2)--(b212);
\draw[-stealth] (b2)--(b223);
\draw[-stealth] (b3)--(b313);
\draw[-stealth] (b3)--(b323);
\draw[-stealth, red] (b1)--(b1123);
\draw[-stealth, red] (b2)--(b2123);
\draw[-stealth] (b3)--(b3123);

\draw[-stealth, red] (b123)--(b1123);
\draw[-stealth, red] (b123)--(b2123);
\draw[-stealth] (b123)--(b3123);
\draw[-stealth] (b123)--(b12123);
\draw[-stealth] (b123)--(b23123);
\draw[-stealth] (b123)--(b13123);

\draw[-stealth] (b12)--(b12123);
\draw[-stealth] (b23)--(b23123);
\draw[-stealth] (b13)--(b13123);
\draw[-stealth] (b12)--(b112);
\draw[-stealth] (b13)--(b113);
\draw[-stealth] (b12)--(b212);
\draw[-stealth] (b23)--(b223);
\draw[-stealth] (b13)--(b313);
\draw[-stealth] (b23)--(b323);

\draw[-stealth] (b223)--(b223123);
\draw[-stealth] (b2123)--(b223123);
\draw[-stealth] (b23123)--(b223123);

\draw[-stealth] (b212)--(b212123);
\draw[-stealth] (b2123)--(b212123);
\draw[-stealth] (b12123)--(b212123);

\draw[-stealth] (b112)--(b112123);
\draw[-stealth] (b1123)--(b112123);
\draw[-stealth] (b12123)--(b112123);

\draw[-stealth] (b113)--(b113123);
\draw[-stealth] (b1123)--(b113123);
\draw[-stealth] (b13123)--(b113123);

\draw[-stealth] (b313)--(b313123);
\draw[-stealth] (b3123)--(b313123);
\draw[-stealth] (b13123)--(b313123);

\draw[-stealth] (b323)--(b323123);
\draw[-stealth] (b3123)--(b323123);
\draw[-stealth] (b23123)--(b323123);
\end{scope}
 
 \end{tikzpicture}
 \end{center}

Then $c\Sd^2\Delta[n]$ can be viewed as glued from $\mathcal{P}$ with $\K_h(c\Sd^2\Delta[n-1])$ along $c\Sd^2\Delta[n-1] \times 0_h$ identified with $\K_h(c\Sd²\partial d_n\Delta[n])$.\\ The $c\Sd^2\Delta[n-1]\times 0_h$ and $c\Sd^2\Delta[n-1]\times 1_h$-parts of this $\K_h(c\Sd^2\Delta[n-1])$ are highlighted red and blue in the picture, respectively; the additional point $k_h$ is the green one. 

\begin{center}
 \begin{tikzpicture}[scale=0.8]
 \begin{scope}
 \coordinate (a1) at (0,0); 
 \coordinate (a2) at (7,0);
 \coordinate (a3) at (3.5,6.062);
 
\draw[fill, red](barycentric cs:a1=1,a2=0,a3=0) circle (2pt)node (b1){};
\draw[fill, red](barycentric cs:a1=0,a2=1,a3=0) circle (2pt)node (b2){};
\draw[fill](barycentric cs:a1=0,a2=0,a3=1.0) circle (2pt)node (b3){};
   
\draw[fill, red] (barycentric cs:a1=1,a2=1,a3=1) circle(2pt) node (b123){};

\draw[fill, green](barycentric cs:a1=1,a2=1) circle (2pt) node (b12){};
\draw[fill](barycentric cs:a2=1,a3=1) circle (2pt) node (b23){};
\draw[fill](barycentric cs:a1=1,a3=1) circle (2pt) node (b13){};

\draw[fill, blue](barycentric cs:b1=1,b12=1) circle (2pt) node (b112){};
\draw[fill, blue](barycentric cs:b2=1,b12=1) circle (2pt) node (b212){};
\draw[fill](barycentric cs:b1=1,b13=1) circle (2pt) node (b113){};

\draw[fill](barycentric cs:b3=1,b13=1) circle (2pt) node (b313){};
\draw[fill](barycentric cs:b2=1,b23=1) circle (2pt) node (b223){};
\draw[fill](barycentric cs:b3=1,b23=1) circle (2pt) node (b323){};


\draw[fill, red](barycentric cs:a1=1,b123=1) circle (2pt) node (b1123){};
\draw[fill, red](barycentric cs:a2=1,b123=1) circle (2pt) node (b2123){};
\draw[fill](barycentric cs:a3=1,b123=1) circle (2pt) node (b3123){};

\draw[fill, blue](barycentric cs:b12=1,b123=1) circle (2pt) node (b12123){};
\draw[fill](barycentric cs:b23=1,b123=1) circle (2pt) node (b23123){};
\draw[fill](barycentric cs:b13=1,b123=1) circle (2pt) node (b13123){};

\draw[fill, blue](barycentric cs:b1=1,b12=1,b123=1) circle (2pt) node (b112123){};
\draw[fill](barycentric cs:b2=1,b23=1,b123=1) circle (2pt) node (b223123){};
\draw[fill](barycentric cs:b3=1,b13=1,b123=1) circle (2pt) node (b313123){};

\draw[fill, blue](barycentric cs:b2=1,b12=1,b123=1) circle (2pt) node (b212123){};
\draw[fill](barycentric cs:b3=1,b23=1,b123=1) circle (2pt) node (b323123){};
\draw[fill](barycentric cs:b1=1,b13=1,b123=1) circle (2pt) node (b113123){};

\draw[-stealth] (b1)--(b112);
\draw[-stealth] (b1)--(b113);
\draw[-stealth] (b2)--(b212);
\draw[-stealth] (b2)--(b223);
\draw[-stealth] (b3)--(b313);
\draw[-stealth] (b3)--(b323);
\draw[-stealth, red] (b1)--(b1123);
\draw[-stealth, red] (b2)--(b2123);
\draw[-stealth] (b3)--(b3123);

\draw[-stealth, red] (b123)--(b1123);
\draw[-stealth, red] (b123)--(b2123);
\draw[-stealth] (b123)--(b3123);
\draw[-stealth] (b123)--(b12123);
\draw[-stealth] (b123)--(b23123);
\draw[-stealth] (b123)--(b13123);

\draw[-stealth] (b12)--(b12123);
\draw[-stealth] (b23)--(b23123);
\draw[-stealth] (b13)--(b13123);
\draw[-stealth] (b12)--(b112);
\draw[-stealth] (b13)--(b113);
\draw[-stealth] (b12)--(b212);
\draw[-stealth] (b23)--(b223);
\draw[-stealth] (b13)--(b313);
\draw[-stealth] (b23)--(b323);

\draw[-stealth] (b223)--(b223123);
\draw[-stealth] (b2123)--(b223123);
\draw[-stealth] (b23123)--(b223123);

\draw[-stealth, blue] (b212)--(b212123);
\draw[-stealth] (b2123)--(b212123);
\draw[-stealth, blue] (b12123)--(b212123);

\draw[-stealth, blue] (b112)--(b112123);
\draw[-stealth] (b1123)--(b112123);
\draw[-stealth, blue] (b12123)--(b112123);

\draw[-stealth] (b113)--(b113123);
\draw[-stealth] (b1123)--(b113123);
\draw[-stealth] (b13123)--(b113123);

\draw[-stealth] (b313)--(b313123);
\draw[-stealth] (b3123)--(b313123);
\draw[-stealth] (b13123)--(b313123);

\draw[-stealth] (b323)--(b323123);
\draw[-stealth] (b3123)--(b323123);
\draw[-stealth] (b23123)--(b323123);
\end{scope}
 
 \end{tikzpicture}
 \end{center}

After Step 1 we already have an extension to $\K_v(\mathcal{P})$, so in particular to $\K_v(\K_h(c\Sd^2\Delta[n-1]))$ (i.e., to $\K_v$ of the red part of the last picture), and we want to promote it to an extension for the whole (doubly subdivided) simplex. A naive attempt would be to build pushouts for all the highlighted spans in the following picture:

\begin{center}
\includegraphics[clip=true, trim = 110 480 80 105]{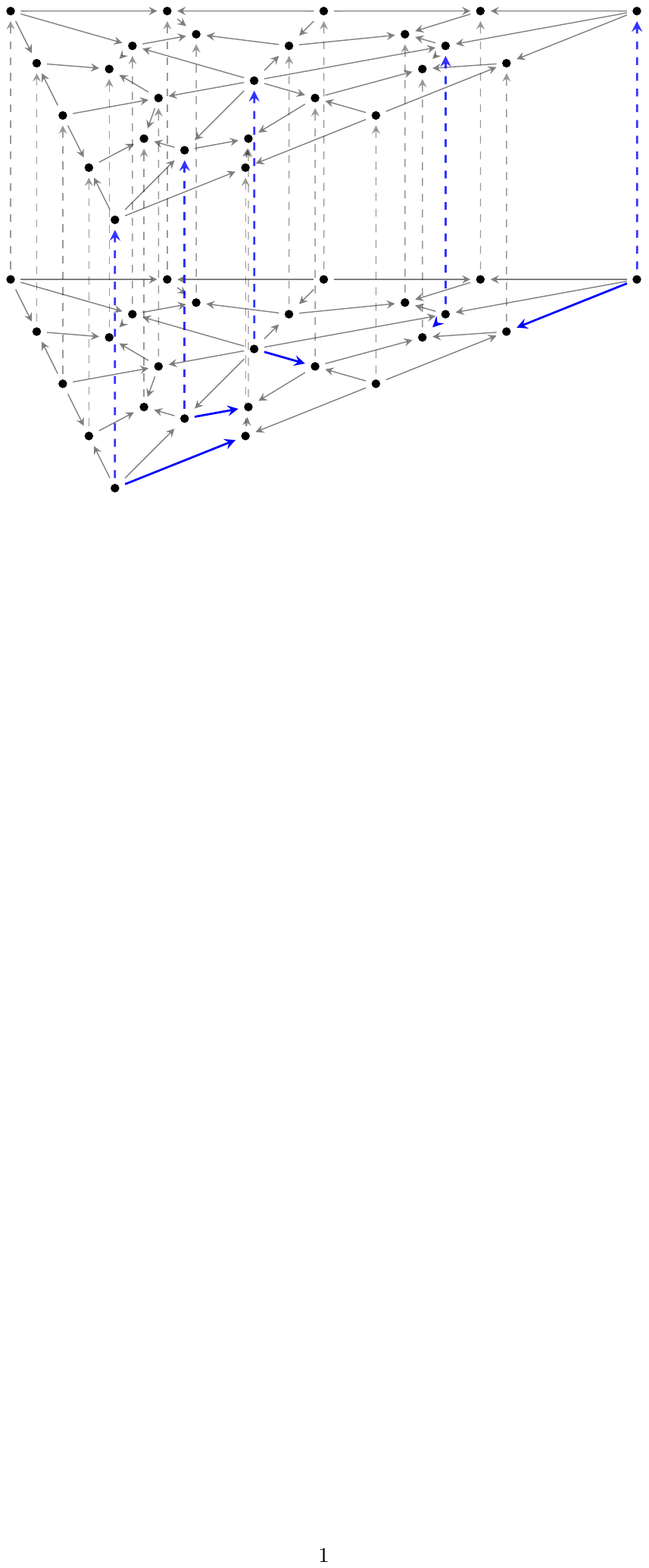}
\end{center}

Yet this does not seem to work without modification. In \ref{Step4}, we take this pushout and modify it by applying $\Psi_{n-1}$ to the pushout result, for n=2 the highlighted part of the picture. 

\begin{center}
\includegraphics[clip=true, trim = 110 480 80 105]{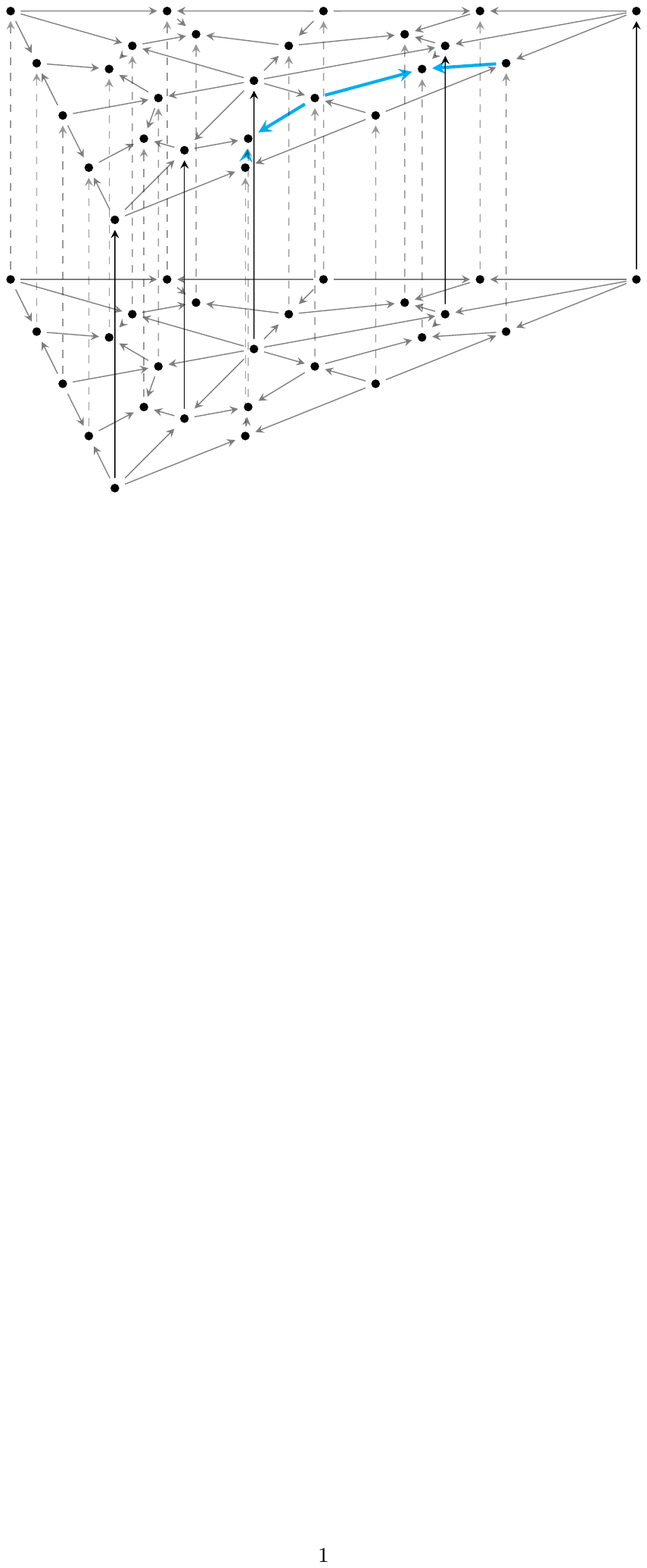}
\end{center}

This modification turns out to be the right one. The extension so far is shown to satisfy \ref{F1} in \ref{Step4} and \ref{F3} in \ref{Step5}. In \ref{Step6}, we extend our functor to the last missing part (corresponding to the $k_h$ of $\K_h(c\Sd^2\Delta[n-1])$), and we check it still has the correct limit property in \ref{Step7}. This completes the construction of $\Psi_n$. We construct $\Phi_{n+1}$ out of $\Psi_n$ and $\Phi_n$ using the boundary conditions in \ref{Step8}. Here, we exploit the fact that an $(n+1)$-horn is obtained by gluing $n$-simplices along $(n-1)$-simplices.

 Now we begin with the actual proof. First, note that $\Phi_1$ can be just chosen to extend a given functor by a constant functor since $c\Sd^2\Lambda^1[1]$ is the category $\underline{0}$ with 
 exactly one object and no non-trivial morphisms. In the same manner, we construct $\Psi_0$ and observe that the condition \ref{F4} is satisfied for $n=0$. We proceed by induction. Assume that extension functors $\Phi_{n}$ and $\Psi_{n-1}$ (and all smaller ones) with properties \ref{F1}, \ref{F3} and \ref{F4}--\ref{F5} have already been constructed, and let $\alpha\colon c\Sd^2 \Delta[n] \to \mathcal{W}$ be given. We would like to extend $\alpha$ to a functor $\K(c\Sd^2 \Delta[n]) \to \mathcal{W}$, yielding the functor $\Psi_n$. The reader should observe that all the steps of our construction are functorial, so $\Phi$ and $\Psi$ are going to be functors. \vspace*{0.3cm}

 \begin{enumerate}[label=\textbf{Step \arabic{enumi}:}, ref=Step \arabic{enumi}]
  \item\label{Step1}  To extend $\alpha$ to $\K_v\K_h(c\Sd^2\Lambda^n[n])$, we apply Lemma \ref{DoubleK} with $\D=c\Sd^2\Lambda^n[n]$. By induction hypothesis, the functor $\Phi_n$ is an extension functor satisfying the hypothesis of Lemma \ref{DoubleK}. This lemma gives us (functorially) a functor $\alpha' \colon \K_v(\mathcal{P})= \K_v\K_h(c\Sd^2\Lambda^n[n])\to \mathcal{W}$ extending $\alpha$. In addition, it satisfies \ref{F1} and \ref{F3} and coincides with $\Phi_n(\alpha|_{c\Sd^2\Lambda^n[n]})$ when restricted to $c\Sd^2\Lambda^n[n]\times (0\to 1)_v$. The last part of Lemma \ref{DoubleK} gives us a compatible fibration 
	\[\alpha'(k_v) \to \Phi_n(\alpha|_{c\Sd^2\Lambda^n[n]})(k_v).\] 
	In particular, the map 
  \begin{eqnarray*}
   \alpha'(k_v) \to \alpha'(\{n\}, 1_v)
  \end{eqnarray*}
 is a fibration as a composition of two fibrations (using Property \ref{F5} for $\Phi_n$). The reader should also observe that during the next steps in the construction of $\Psi_n$, the value on $c\Sd^2\Lambda^n[n] \times 0_h\times (0\to 1)_v$ and thus in particular on $(\{n\}, 1_v)$ will not be changed. \vspace*{0.3cm}

\item\label{Step4} We want to extend $\alpha'$ now somewhat further.  Recall that we have already defined a functor $\alpha'\colon \K_v(\mathcal{P}) \to \mathcal{W}$ which maps morphisms of the form $x \times (0\to 1)_v$ to cofibrations and where $\alpha'(k_v)$ is the limit of $\alpha'|_{\mathcal{P}\times 1_v}$. Let $\mathcal{P}'$ stand short for
\[\mathcal{P}\cup_{c\Sd^2\Delta[n-1]} (c\Sd^2\Delta[n-1]\times (0\to 1)_h), \] 
i.e., $c\Sd^2 \Delta[n]$ without the barycenter of $d_n\Delta[n]$. Here, $c\Sd^2 \Delta[n-1]$ is still identified with $\K_h(\partial d_n\Delta[n])$ as in the description of the plan of the proof. Now we would like to extend both $\alpha'$ and $\alpha$ simultaneously to $\KK_v(\mathcal{P}')$. Observe that for each object $x$ of $c\Sd^2\Delta[n-1]$, we already have a span of morphisms

\vspace{0.4cm}
\hbox{
\begin{tikzpicture}[node distance=1.6cm, auto,  >=stealth, scale=0.8]
  \node (P) {$\alpha(x \times 0_h\times 0_v)=\alpha'(x \times 0_h\times 0_v)$};
  \node (B) [below of=P] {$\alpha'(x \times 0_h\times 1_v)$};
  \node (A) [right of=P,xshift = 6.5cm] {$\alpha(x\times 1_h\times 0_v)$};
  \draw[->] (P) to node{$\alpha'(x\times 0_h \times (0\to 1)_v)$} (B);
  \draw[->] (P) to node{$\alpha(x\times (0\to 1)_h \times 0_v)$} (A);
\end{tikzpicture}\vspace{0.4cm}
}

\noindent and the horizontal map is a cofibration by construction. Thus we could build all the pushouts of such squares to obtain values of our extension $\wt{\alpha}$ on all of $\K_v(\mathcal{P}')$,
 since we can simply take compositions to obtain missing maps from $\alpha'(k_v)$. Yet this doesn't seem to work, and we have to modify this attempt somewhat further. Taking pushouts and induced maps yields a functor $\gamma \colon c\Sd^2\Delta[n-1] \to \mathcal{W}$, where $c\Sd^2\Delta[n-1]$ is identified with its $1_h\times 1_v$-copy. We can apply $\Psi_{n-1}$ to it. We define our new extension $\wt{\alpha}$ to coincide with $\alpha'$ and $\alpha$ wherever it makes sense, and let it be $\Psi_{n-1}(\gamma)|_{c\Sd^2\Delta[n-1]\times 1}$ on $c\Sd^2\Delta[n-1] \times 1_h\times 1_v$. Since there are no morphisms from these to objects not of this form in $\K_v(\mathcal{P}')$, we have to take care only 
of morphisms with targets in $c\Sd^2\Delta[n-1] \times 1_h\times 1_v$. To do so, we compose the pushout morphisms we obtained for $\gamma$ with the corresponding maps in $\Psi_{n-1}(\gamma)|_{c\Sd^2\Delta[n-1] \times (0\to 1)}$ as depicted in the following diagram for an $a\in c\Sd^2\Delta[n-1]$:
\[\xymatrix{
\wt{\alpha}(k_v) \ar@{-->}[drr] \ar[ddr]\\
&&\wt{\alpha}(a\times 1_h\times 1_v) = \Psi_n(\gamma)(a\times 1_h\times 1_v\times 1) \\
&\wt{\alpha}(a\times 0_h\times 1_v)\ar@{-->}[ur] \ar[r] & \gamma(a\times 1_h \times 1_v)\ar@{ >->}[u] \\
&\wt{\alpha}(a\times 0_h\times 0_v) \ar[r]\ar@{ >->}[u]\ar@{}[ur]|{\text{\pigpenfont L}} & \wt{\alpha}(a\times 1_h\times 0_v) \ar@{-->}@/_3.6pc/[uu]\ar@{ >->}[u]
}
\]
 This yields a functor 
\[
 \wt{\alpha}\colon \K_v(\mathcal{P}') \to \mathcal{W}.
\]
Moreover, note that all pushout morphisms of the form 
\[\alpha(x\times 1_h\times 0_v) \to \gamma(x) \]
 are cofibrations. By Property \ref{F1}, the maps these are composed with are also cofibrations, so that for any $x \in c\Sd^2\Delta[n-1] \times 1_h$, the morphism $\wt{\alpha}(x \times (0\to 1)_v)$ is a cofibration.\vspace*{0.3cm}

\item \label{Step5} We want to show that $\wt{\alpha}$ satisfies again a limit property analogous to \ref{F3}, namely that the object $\wt{\alpha}(k_v)$ is a limit of the diagram $\wt{\alpha}|_{\mathcal{P}' \times 1_v}$. It is enough to show that the inclusion functor $\iota \colon\mathcal{P} \to \mathcal{P}'$ is an initial functor since we already know that $\wt{\alpha}(k_v)$ is a limit of $\wt{\alpha}|_{\mathcal{P}\times 1_h}$. 

For doing so, we have to show that for any object $x$ of $\mathcal{P}'$, the comma category 
\[(\iota\downarrow x)\]
 is non-empty and connected. This is clear for objects $x$ in $\mathcal{P}$ since in this case $(\iota\downarrow x)$ has the terminal object $\id_{x}$. Now consider an object $(y, 1_h)$  where $y$ is an object of $c\Sd^2\Delta[n-1]$. Then $(y,0_h) \to (y, 1_h)$ is the terminal object of the category $(\iota\downarrow (y,1_h))$. Thus, the functor $\iota$ is initial and the object $\wt{\alpha}(k_v)$ is a limit of the diagram $\wt{\alpha}|_{\mathcal{P}' \times 1_v}$. 

Furthermore, observe that since $\alpha'(k_v)=\wt{\alpha}(k_v)$ and the maps to $\wt{\alpha}(\{n\},1_v)$ coincide, the map $\wt{\alpha}(k_v) \to \wt{\alpha}(\{n\}, 1_v)$ is a fibration.\vspace*{0.3cm}

\item \label{Step6} The functor $\wt{\alpha}$ is undefined at $k_h\times 1_v$ for $k_h$ in $\K_h(c\Sd^2\Delta[n-1])$ the barycenter of $d_n\Delta[n]$. We want to extend $\wt{\alpha}$ to a functor 
\[ \delta\colon \K(c\Sd^2\Delta[n]) \to \mathcal{W}.\]
 On objects of $\K_v(\mathcal{P}')$ except $k_v$, define $\delta$ to coincide with $\wt{\alpha}$. Our first try would be to define $\delta(k_h\times 1_v)$ to be the value at the additional point $\Psi_{n-1}(\gamma)(k)$. This is again almost what we do. This object already has all necessary maps: by definition, we have maps to $\delta(c\Sd^2\Delta[n-1] \times 1_h\times 1_v)$. Since by Property \ref{F3}, the object $\Psi_{n-1}(\gamma)(k)$ is a limit of $\delta(c\Sd^2\Delta[n-1] \times 1_h\times 1_v)$ and since we have compatible maps from $\delta(k_h\times 0_v)$ to $\delta(c\Sd^2\Delta[n-1] \times 1_h\times 1_v)$, we get also a map $\delta(k_h\times 0_v)$ to the limit $\Psi_{n-1}(\gamma)(k)$. For a similar reason, we obtain a map $\wt{\alpha}(k_v)\to \Psi_{n-1}(\gamma)(k)$. As the map $\delta(k_h\times 0_v)\to\Psi_{n-1}(\gamma)(k)$ is not necessarily a cofibration, we first factorize this map into a cofibration followed by a fibration using functorial factorization: 
\begin{eqnarray*}
 \delta(k_h\times 0_v)\xrightarrow{g'} D \xrightarrow{h'} \Psi_{n-1}(\gamma)(k).
\end{eqnarray*}
We define $\delta(k_h \times 1_v)=D$, and $g'$ to be the image of $k_h\times (0\to 1)_v$ under $\delta$. Analogously to \ref{LStep2} of Lemma \ref{DoubleK}, we define $\delta(k_v)$ to be the following pullback:
\[\xymatrix{\delta(k_v)\ar[r]\ar@{->>}[d]&D \ar@{->>}[d]^{h'} \\ \wt{\alpha}(k_v) \ar[r]\ar@{}[ur]|{\text{\pigpenfont J}} & \Psi_{n-1}(\gamma)(k) }\] 
 This is possible since $h'$ is defined to be a fibration. As before, it also implies that $\delta(k_v)\to \wt{\alpha}(k_v)$ is a fibration. Again, besides the two morphisms already defined, all other morphisms in $\K(c\Sd^2\Delta[n])$ involving $k_v$ start in $k_v$, so the corresponding image morphism under $\delta$ is obtained by precomposing with $h'$ the corresponding map from $\Psi_{n-1}(\gamma)(k)$. This defines a functor 
\begin{eqnarray*}
 \delta\colon \K(c\Sd^2\Delta[n]) \to \mathcal{W}.
\end{eqnarray*}
We define $\Psi_n(\alpha)$ to be $\delta$. First, observe that this construction is functorial in $\alpha$, since we only used constructions like functorial factorization, limits and colimits and previously constructed functors $\Psi_{n-1}$ and $\Phi_n$ to define $\Psi_n$. Moreover, this is by construction an extension functor satisfying condition \ref{F1}. Also, \ref{F5} is satisfied as the map $\delta(k_v) \to \delta(\{n\}, 1_v)$ is a composition of two fibrations.\vspace*{0.3cm}

\item \label{Step7} Next, we want to prove that \ref{F3} is satisfied for our newly defined functor $\Psi_n$. Note that in the diagram $c\Sd^2\Delta[n]\times 1_v$, the only object not in $\mathcal{P}'\times 1_v$ is $k_h\times 1_v$ and, as mentioned above, it has only outgoing maps to objects of $\mathcal{P}'\times 1_v$, more precisely, to objects of $c\Sd^2\Delta[n-1]\times 1_h\times 1_v$ inside it. Now assume that we have an object $T$ in $\mathcal{W}$ with compatible morphisms to $\delta(c\Sd^2\Delta[n]\times 1_v)$. In particular, we can consider only the subset of these morphisms with targets in $\delta(\mathcal{P}')$, inducing a morphism $T\to \wt{\alpha}(k_v)$. Considering the even smaller subset of maps to $\delta(c\Sd^2\Delta[n-1]\times 1_h\times 1_v)$, we obtain a map to $\Psi_{n-1}(\gamma)(k)$ which is compatible with the previous one. We have not yet
considered the map $T\to \delta(k_h\times 1_v)=D$. Observe that the composition $T\to D \to \Psi_{n-1}(\gamma)(k)$ is the same map as the already described one. Thus, by the universal property of the pullback, we obtain a compatible map $T\to \delta(k_v)$. Similarly, it is not hard to see the uniqueness. Thus, $\delta(k_v)$ is indeed a limit of the restriction of $\delta$ to $c\Sd^2\Delta[n]\times 1_v$, and the functor $\Psi_n$ satisfies \ref{F3}. In particular, $\Psi_n$ has now all desired properties.\vspace*{0.3cm}

\item \label{Step8} We continue by constructing the functor $\Phi_{n+1}$. Let 
\[\alpha\colon c\Sd^2\Lambda^n[n]\to \mathcal{W}\]
 be given. Property \ref{F4} dictates us how to extend $\alpha$ to $c\Sd²\Lambda^n[n] \times (0\to 1)$. This is possible by Property \ref{F6}. In particular, it implies that $\Psi_n(\alpha|_{d_i\Delta[n+1]})(\{n+1\},1_v)$ is the same for all $0\leq i\leq n$; we call this object $E$. Now we need to define $\Phi_{n+1}(\alpha)(k)$. 
We have fibrations $\Psi_n(\alpha|_{d_i\Delta[n+1]})(k_v)\to E$ for all $0\leq i\leq n$. Successively building pullbacks, we obtain a new object $X$ in $\mathcal{W}$, together with fibrations $X \to \Psi_n(\alpha|_{d_i\Delta[n+1]})(k_v)$ for all $0\leq i\leq n$, which give us the same fibration $X\to E$ when continued to $E$. Moreover, $X$ is the limit of the diagram given by maps $\Psi_n(\alpha|_{d_i\Delta[n+1]})(k_v)\to E$. Define $\Phi_{n+1}(\alpha)(k)=X$. By construction, the Properties \ref{F1} and \ref{F4} as well as \ref{F5} are obviously satisfied. So we are left with proving \ref{F3}. This is completely analogous to the argument in \ref{Step7}.

 \end{enumerate}

All in all, this completes the induction step and thus the proof of the theorem. 
 
\end{pf}

\section{Examples}
\label{Examples}
In this section, we would like to discuss some examples. Obviously, weak equivalences $\WW$ of any model category satisfy the conditions of Theorem \ref{MainTheorem}, thus providing a wide range of examples. Here, we would like to present some of these and a further example of partial model category. In particular, we want to show examples of partial model categories $(\C, \WW)$ where $\Ex^2 N\WW$ is Kan, but $\Ex N\WW$ is not Kan. This includes a category with all pullbacks. In contrast, we will show that for a category $\CC$ with all pushouts, $\Ex N\CC$ is already Kan. Furthermore, we will give an example of a category $\CC$ with $\Ex N\CC$ Kan, which is not the category of weak equivalences of a partial model category. This shows that the hypotheses of Theorem \ref{MainTheorem} are sufficient, but not necessary. At last, we note that for every filtered category $\CC$ the simplicial set $\Ex N\CC$ is Kan.
\\

Recall that any category with all pushouts or all pullbacks can be given a structure of a partial model category with every morphism being a weak equivalence. 

\begin{example}\label{FIExample}
 It is easy to show that the category $FI$ of finite sets and injections has all pullbacks, so $\Ex^2N(FI)$ is Kan. But $FI$ does not have the lifting property with respect to the diagram
 \begin{eqnarray*}
  \bullet \to \bullet\rightrightarrows \bullet \dashrightarrow \bullet
 \end{eqnarray*}
For example, we can consider 
 \[\xymatrix{
\{0\} \ar[r] & \{0,1,2\}\ar@/^/[r]^{\id} \ar@/_/[r]_{\tau} & \{0,1,2\},
}\]
where $\tau$ permutes $1$ and $2$. Thus, $\Ex N(FI)$ is not Kan. 
 
It is furthermore interesting to note that while property \ref{CF1} holds for $FI$, the pushout in the category of sets of a given diagram of finite sets and injections between them is not necessarily a pushout in the category $FI$. For example, consider
\[\xymatrix{\{1\} \ar[r] \ar[d]& \{1,2\}\\
\{0,1\} 
}\]
and the maps maps $\id\colon \{1,2\} \to \{1,2\}$ and $q\colon \{0,1\}\to \{1,2\}$ with $q(0) = 2$ and $q(1) = 1$. Then the induced map $\{0,1,2\}\to \{1,2\}$ is not an injection. It is easy to deduce that this diagram actually does not have a pushout in $FI$. 
\end{example}

In contrast to this example, every category $\WW$ with all pushouts already possesses left calculus of fractions with respect to itself. Indeed, even more is true; we will prove the following easy proposition. This is essentially contained in \cite{Pare}, but we prefer to give the following direct proof. 

\begin{Prop}\label{PushoutProp}
 Let $\WW$ be a category with all pushouts. Let the diagram
 \[\xymatrix{
A \ar[r]^{\alpha} & X \ar@/^/[r]^{f} \ar@/_/[r]_{g} & Y
}\]
with $f\alpha=g\alpha$ be given. Then the diagram
 \[\xymatrix{
 X \ar@/^/[r]^{f} \ar@/_/[r]_{g} & Y
}\]
has a coequalizer. In particular, both properties \ref{CF1} and \ref{CF2} hold in $\WW$. Thus $\WW$ possesses left calculus of fractions with respect to itself. 
\end{Prop}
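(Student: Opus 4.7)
The plan is to construct an explicit coequalizer of $f$ and $g$ using two pushouts, after which \ref{CF1} will be immediate from the existence of pushouts alone, and \ref{CF2} will follow by taking $t$ to be the coequalizer morphism.

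First, I would form the pushout $P := X \amalg_A X$ of the span $X \xleftarrow{\alpha} A \xrightarrow{\alpha} X$, with coprojections $i_1, i_2 \colon X \to P$ satisfying $i_1 \alpha = i_2 \alpha$. Its universal property produces two canonical maps out of $P$: the pair $(\id_X, \id_X)$ agrees trivially on $A$ and induces a fold map $\omega \colon P \to X$ with $\omega i_j = \id_X$ for $j=1,2$, while the pair $(f,g)$ agrees on $A$ by the standing hypothesis $f\alpha = g\alpha$ and induces a comparison map $\phi \colon P \to Y$ with $\phi i_1 = f$ and $\phi i_2 = g$.

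Next, I would form the pushout of the span $X \xleftarrow{\omega} P \xrightarrow{\phi} Y$,
\[
\xymatrix{
P \ar[r]^-{\omega} \ar[d]_{\phi} & X \ar[d]^{r} \\
Y \ar[r]^{q} & Q,
}
\]
so that $q\phi = r\omega$. Precomposing this identity with $i_1$ and $i_2$ yields $qf = r\omega i_1 = r$ and $qg = r\omega i_2 = r$, so in particular $qf = qg$. For the universality check, given any $q' \colon Y \to Z$ with $q'f = q'g$, I would set $r' := q'f$ and verify $q'\phi i_j = r' = r'\omega i_j$ for $j=1,2$; the uniqueness clause of the pushout $P$ then forces $q'\phi = r'\omega$, and the universal property of $Q$ delivers the required unique factorization $Q \to Z$ through $(q,r)$. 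Hence $q$ is a coequalizer of $f$ and $g$.

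With the coequalizer in hand, \ref{CF1} follows at once because a pushout serves as a common multiple for any two morphisms with common source, and \ref{CF2} follows by taking $t := q$. There is no substantive obstacle in this argument; the only conceptual ingredient is the idea of first splitting $X$ into two copies glued along $A$ and then refolding, which is precisely the step that requires the existence of the common fork $\alpha$.
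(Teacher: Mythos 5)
Your proof is correct, but it decomposes the coequalizer differently from the paper. The paper first forms the pushout $B = Y \amalg_A X$ of $\alpha$ along $f\alpha = g\alpha$; the two induced maps $F, G \colon B \to Y$ (restricting to $f$ resp.\ $g$ on $X$ and to $\id_Y$ on $Y$) then form a pair with a common section $j \colon Y \to B$, and their pushout is shown to be the coequalizer --- in effect, the first pushout manufactures a reflexive pair so that the classical ``pushout of a reflexive pair is its coequalizer'' argument applies. You instead form $P = X \amalg_A X$ and push out the fold map $\omega$ against the comparison map $\phi$ induced by $(f,g)$. Both routes use exactly two pushouts and exploit the hypothesis $f\alpha = g\alpha$ only in the first one; yours has the small advantage that $qf = r = qg$ drops out immediately from precomposing with the coprojections, whereas the paper needs the separate step $\varphi_1 = \varphi_2$ before it can even name the coequalizing map. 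One point worth making explicit in your universality check: a coequalizer requires uniqueness of the factorization through $q$ alone, not through the pair $(q,r)$; this follows because $r = qf$ forces $\psi r = q'f = r'$ for any $\psi$ with $\psi q = q'$, after which the uniqueness clause of the pushout $Q$ applies.
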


\begin{pf}
 First, we build the pushout $B$ as follows:
 \begin{eqnarray*}
\xymatrix{ 
A \ar[r]^{\alpha}\ar[d]_{f\alpha=g\alpha} & X\ar[d]^{i}\\
Y\ar[r]_{j} & B. 
}  
 \end{eqnarray*}
 From $B$, we obtain two maps $F, G\colon B \to Y$ from
 \begin{eqnarray*}
  \xymatrix{
  A\ar[r]^{\alpha} \ar[d]_{f\alpha} & X\ar[d]^{i}\ar@/^/[rdd]^{f} &\\
  Y \ar[r]_{j}\ar@/_/[rrd]_{\id}& B\ar@{-->}[rd]_{F}& \\
  & & Y
  }
 \end{eqnarray*}
 and 
 \begin{eqnarray*}
  \xymatrix{
  A\ar[r]^{\alpha} \ar[d]_{g\alpha} & X\ar[d]^{i}\ar@/^/[rdd]^{g} &\\
  Y \ar[r]_{j}\ar@/_/[rrd]_{\id}& B\ar@{-->}[rd]_{G}& \\
  & & Y.
  }
 \end{eqnarray*}
 
We claim that the pushout $Z$ defined in the diagram below is the desired coequalizer:
\begin{eqnarray*}
 \xymatrix{ 
B \ar[r]^{G}\ar[d]_{F} & Y\ar[d]^{\varphi_1}\\
Y\ar[r]_{\varphi_2} & Z. 
}  
\end{eqnarray*}

First, we want to show that $\varphi_1=\varphi_2$. This holds since we can precompose the equation $\varphi_1 G=\varphi_2 F$ with $j\colon Y \to B$ and obtain by definition of $F$ and $G$: 
\begin{eqnarray*}
 \varphi_1=\varphi_1 \circ \id_Y=\varphi_1 G j= \varphi_2 F j=\varphi_2.
\end{eqnarray*}

Next, we claim that $\varphi=\varphi_1=\varphi_2\colon Y \to Z$ equalizes $f$ and $g$. Indeed, precompose the identity $\varphi_1 G=\varphi_2 F$ with $i\colon X \to B$ and obtain again by definition of $F$ and $G$: 
\begin{eqnarray*}
 \varphi g= \varphi_1 G i  =\varphi_2 F i= \varphi f.
\end{eqnarray*}

Last, we have to show that $\varphi$ is universal with this property. Assume $h\colon Y \to T$ is a map with $hf=hg$. We want to show that $h$ factors uniquely through $Z$, i.e., that there is a unique map $\psi\colon Z \to T$ so that $\psi\circ \varphi=h$. To construct such a map, we need to show that $hG=hF\colon B\to Y$; this will induce a unique map $\psi$ as desired. As $B$ was defined as a pushout, to check this property, it is equivalent to check that $hGj=hFj$ and $hGi=hFi$. As $Fj=Gj=\id_Y$ and $hGi=hg=hf=hFi$, we are done. 
\end{pf}

\begin{example}
Let $\mathrm{we} \Cat$ be the category with objects all (small) categories and as morphisms functors that induce weak equivalences on nerves. As these are the weak equivalences in the Thomason model structure, this falls inside the scope of our theorem. We want to show that Property \ref{CF1} fails for $\mathrm{we}\Cat$. 

Let $\CC$ be the category with two objects $x$ and $y$ and two non-identity morphisms $a$ and $b$ from $x$ to $y$, depicted as follows:
 \[\xymatrix{
 x \ar@/^/[r]^{a} \ar@/_/[r]_{b} & y
}\]
Let furthermore $\mathbb{N}$ denote the category with one object corresponding to the monoid of natural numbers (with addition) and call the generating morphism $t$.
Then we consider the diagram
\[\xymatrix{ \CC \ar[r]^F \ar[d]_G & \mathbb{N} \\
\mathbb{N} 
}\]
  The functor $F$ sends $a$ to $t$ and $b$ to the identity and the functor $G$ sends $a$ to the identity and $b$ to $t$. We need to show that $NF$ and $NG$ are weak equivalences. As $N\mathbb{N} \to N\mathbb{Z}$ is a weak equivalence (e.g. by \cite[Proposition 4.4]{Fiedorowicz}), we only need to show that the composition $\CC \to \mathbb{Z}$ induces a weak equivalence on nerves. As both nerves are $K(\Z,1)$, we have only to show that the map induces an isomorphism on $\pi_1$. The fundamental group of $\CC$ is generated by $b^{-1}a$. This is mapped by $F$ and $G$ to $1$ and $-1$ in $\mathbb{Z}$, respectively.
 Thus, $NF$ and $NG$ are weak equivalences. 

Now assume we could complete the above diagram to a commutative square
\[\xymatrix{ \CC \ar[r]^F \ar[d]_G & \mathbb{N} \ar[d] \\
\mathbb{N} \ar[r] & \DD
}\]
The resulting functor $\CC \to \DD$ has to send $x$ and $y$ to the same object and $a$ and $b$ to the identity, i.e., it factors over the terminal category $\ast$. As $N\CC$ is not contractible, the functor $\CC \to \DD$ cannot be a weak equivalence. 

Thus, $\Ex N \mathrm{we} \Cat$ is not Kan.
\end{example}

\begin{example}
Virtually the same example works also for the category of weak equivalences $\mathrm{we}\Top$ of $\Top$, either meaning homotopy equivalences or weak homotopy equivalences. Consider the diagram
\[\xymatrix{ S^1 \ar[r]^f\ar[d]_g & S^1\\
S^1 }\]
where $f$ collapses to the lower closed hemisphere to a point and $g$ the upper one. These are clearly homotopy equivalences. Assume we could complete the diagram to a commutative square
\[\xymatrix{ S^1 \ar[r]^f\ar[d]_g & S^1\ar[d]\\
S^1 \ar[r] & X }\]
Then the resulting map $S^1 \to X$ must send all points of the lower hemisphere and all points of the upper hemisphere to the same point. Hence, it factors over a point and cannot be a (weak) homotopy equivalence. 

Thus, $\Ex N \mathrm{we}\Top$ is not Kan.
\end{example}

\begin{example}\label{MonoidExample}
Consider the monoid $M$ generated by one element $a$ with the relation $a^2 = a$. We view it as a category with one object and one non-trivial morphism $a$. It has the property that $ax = a = ay$ for any morphisms $x,y$ in $M$. Thus, $M$ has a left calculus of fraction with respect to itself and $\Ex NM$ is fibrant. 

On the other hand, $M$ is not the category of weak equivalences of a partial model category. Assume otherwise. If the class of acyclic cofibrations contains $a$, then the pushout of the diagram
\[\xymatrix{\bullet \ar[r]^a\ar[d]_a &  \bullet \\
\bullet }\]
has to exist. Assume that 
\[\xymatrix{\bullet \ar[r]^a\ar[d]_a &  \bullet\ar[d]^b \\
\bullet\ar[r]_c & \bullet }\]
is a pushout. This is equivalent to the following: For every $x,y \in M$ with $xa = ya$, there is a \textit{unique} $z\in M$ with $zba = xa$. As every product involving $a$ equals $a$, this $z$ can always be chosen to be either $1$ or $a$, so it cannot be unique and the pushout of the diagram above does not exist. Thus, the class of acyclic cofibrations has to be equal to $\{\id\}$. As $M$ is self-dual, the same is true for the class of acyclic fibrations. Thus, we cannot factor the morphism $a$ into an acyclic cofibration and an acyclic fibration. 

Thus, $M$ is not the category of weak equivalences of a partial model category. 
\end{example}

\begin{example}
Every filtered category has a left calculus of fraction. Indeed, \ref{CF2} is clear by definition and given a span 
\[\xymatrix{X\ar[r]^s \ar[d]_t& Y \\
Z }\]
we can argue as follows: By the definition of a filtered category, there is an object $W$ with morphisms $f\colon Y\to W$ and $g\colon Z \to W$. Furthermore, there is a morphism $h\colon W\to Q$ with $hfs = hgt$. This defines a commutative square
\[\xymatrix{X\ar[r]^s \ar[d]_t& Y \ar[d]^{hf} \\
Z \ar[r]_{hg} & Q}\]
This implies \ref{CF1}.
\end{example}

\section{Open questions and outlook}
\label{OpenQuestions}
\begin{Question}Is every category equivalent in the Thomason model structure to the category of weak equivalences of a partial model category?
\end{Question}

Note that Example \ref{MonoidExample} shows that not every category that is fibrant in the Thomason model structure is the category of weak equivalences of a partial model category. 

\begin{Question}Is there a notion of a category $\CC$ with an $(n+1)$-arrow calculus (with respect to itself) such that $\Ex^nN\CC$ is fibrant but not necessarily $\Ex^{n-1}N\CC$?
\end{Question}
This question is a strengthening of a conjecture of Beke as formulated in \cite[Conjecture 3.1]{BekeFibrations} that there are for all $n$ categories $\CC$ such that $\Ex^n N\CC$ is fibrant but $\Ex^{n-1}N\CC$ is not. As a main difficulty in this conjecture is the lack of good criteria for $\Ex^n N\CC$ being fibrant, our question seems to be a possible line of attack towards Beke's conjecture. 

Note that being a groupoid is the same as a $1$-arrow calculus and a left calculus of fractions is a form of $2$-arrow calculus. Furthermore, partial model categories are a form of categories with a $3$-arrow calculus. In the best of all worlds, one could hope that there is a (transparent) notion of an $(n+1)$-arrow calculus such that $\Ex^nN\CC$ is fibrant if and only if $\CC$ has an $(n+1)$-arrow calculus. 

\begin{Question}Is every partial model category fibrant in the Barwick--Kan model structure on relative categories as defined in \cite{BKRelative}?\end{Question}
This does not seem to be true, but the question whether model categories are fibrant in the Barwick--Kan model structure will be addressed in a forthcoming paper by the first-named author. 

\begin{Question}Is there an example of a category $\CC$ such that $\Ex^2 N\CC$ is fibrant, but $\Ex^2N\CC^{op}$ is not?\end{Question}
There are a few observations motivating this question: Whether $N\CC$ is Kan is a self-dual property ($\CC$ has it if and only if $\CC^{op}$ has it). Whether $\Ex N\CC$ is Kan is \textit{not} a self-dual property as Example \ref{FIExample} and Proposition \ref{PushoutProp} show. In contrast, whether $\CC$ is the category of weak equivalences of a partial model category is again a self-dual property. 

 \bibliographystyle{plain}
\bibliography{../Garside}
\end{document}